\documentclass[11pt,leqno]{article}
\usepackage{amsthm,amsfonts,amssymb,amsmath,oldgerm}
\usepackage{epsfig}
\numberwithin{equation}{section}
\usepackage[thinlines]{easybmat}


\setlength{\evensidemargin}{0in} \setlength{\oddsidemargin}{0in}
\setlength{\textwidth}{6in} \setlength{\topmargin}{0in}
\setlength{\textheight}{8in}

\newcommand{\btheta}{{\bar \theta}}


\def\eps{\varepsilon }



\newcommand\R{\mathbb R}

\def\eps{\varepsilon}


\newcommand\br{\begin{remark}}
\newcommand\er{\end{remark}}
\newcommand\bp{\begin{pmatrix}}
\newcommand\ep{\end{pmatrix}}
\newcommand\be{\begin{equation}}
\newcommand\ee{\end{equation}}
\newcommand\ba{\begin{equation}\begin{aligned}}
\newcommand\ea{\end{aligned}\end{equation}}


\newcommand{\bap}{\begin{app}}
\newcommand{\eap}{\end{app}}
\newcommand{\begs}{\begin{exams}}
\newcommand{\eegs}{\end{exams}}
\newcommand{\beg}{\begin{example}}
\newcommand{\eeg}{\end{exaplem}}
\newcommand{\bpr}{\begin{proposition}}
\newcommand{\epr}{\end{proposition}}
\newcommand{\bt}{\begin{theorem}}
\newcommand{\et}{\end{theorem}}
\newcommand{\bc}{\begin{corollary}}
\newcommand{\ec}{\end{corollary}}
\newcommand{\bl}{\begin{lemma}}
\newcommand{\el}{\end{lemma}}
\newcommand{\bd}{\begin{definition}}
\newcommand{\ed}{\end{definition}}
\newcommand{\brs}{\begin{remarks}}
\newcommand{\ers}{\end{remarks}}

\newtheorem{theo}{Theorem}[section]

\newtheorem{cor}[theo]{Corollary}

\newtheorem{exams}[theo]{Examples}

\numberwithin{equation}{section}


\newcommand{\RR}{{\mathbb R}}

\newcommand{\CC}{{\mathbb C}}

\newcommand{\Id}{{\rm Id }}
\newcommand{\Range}{{\rm Range }}

\newcommand{\sgn}{\text{\rm sgn}}
\newtheorem{theorem}{Theorem}[section]
\newtheorem{proposition}[theorem]{Proposition}
\newtheorem{corollary}[theorem]{Corollary}
\newtheorem{lemma}[theorem]{Lemma}
\newtheorem{definition}[theorem]{Definition}

\newtheorem{example}[theorem]{Example}
\newtheorem{remark}[theorem]{Remark}


\newcommand\cF{{\cal  F}}



\pagestyle{headings}

\newcommand{\tr}{\,\mbox{\rm tr}}








\newcommand{\bi}{\bibitem}

\newcommand{\beq}{\begin{equation}}
\newcommand{\eeq}{\end{equation}}



\title{
Convergence as period goes to infinity of spectra of periodic traveling waves toward essential spectra of a homoclinic limit
}


\author{\sc \small
Zhao Yang\thanks{
Indiana University, Bloomington, IN 47405;
yangzha@indiana.edu: Research of Z.Y. was partially supported
under NSF grant no. DMS-0300487 and an Indiana University Research Assistantship.
}
and
Kevin Zumbrun\thanks{Indiana University, Bloomington, IN 47405;
kzumbrun@indiana.edu: Research of K.Z. was partially supported
under NSF grant no. DMS-0300487.
 }}
\begin{document}

\maketitle


\begin{center}
{\bf Keywords}: periodic Evans function, homoclinic limit.
\end{center}


\begin{abstract}
We revisit the analysis by R.A. Gardner of convergence of spectra of periodic
traveling waves in the homoclinic, or infinite-period limit, extending
his results to the case of essential rather than point spectra of the limiting homoclinic wave.
Notably, convergence to essential spectra is seen to be of algebraic rate with respect 
to period as compared to the exponential rate of convergence to point spectra.
In the course of the analysis, we show not only convergence of spectrum
but also convergence of an appropriate renormalization of the associated
periodic Evans function to the Evans function for the limiting homoclinic wave,
a fact that is useful for numerical investigations.
\end{abstract}

\section{Introduction}

In this note, using asymptotic Evans function techniques
like those introduced for the study of homoclinic and heteroclinic traveling waves in \cite{PZ,Z1,Z2},
we build on the pioneering analysis of R.A. Gardner \cite{G1,G2} of convergence of spectra of periodic traveling
waves in the infinite-period, or ``homoclinic'', limit,
extending his results to the case that the limiting homoclinic spectra are of essential rather than point spectrum type.

Under quite general conditions, Gardner showed that loops of essential periodic spectra 
bifurcate from isolated point spectra $\lambda_0$ of the limiting homoclinic wave.
Indeed, it is readily seen that, on compact sets bounded away from regions of essential homoclinic spectrum, 
periodic spectra converge as period $X\to \infty$ at exponential rate $O(e^{-\eta X})$, $\eta>0$ 
to the point spectra of the limiting homoclinic; see \cite{SS,OZ1}, or Section \ref{s:convergence} below.

In the standard case arising generically for reaction diffusion systems
of a limiting homoclinic wave with strictly stable essential
spectrum, or ``spectral gap'', and a single isolated eigenvalue at $\lambda=0$ associated with
translational invariance of the underlying equations, this reduces the study of periodic stability in
the large-period limit to asymptotic analysis of the loop of ``critical'' periodic spectra bifurcating from 
the neutral eigenvalue $\lambda=0$.
For, recall that linearized and nonlinear stability have been shown in quite general circumstances 
to follow from the ``dissipative spectral stability'' condition of Schneider: that periodic spectra 
move into the stable half plane at quadratic rate in the associated Bloch-Floquet number as the Bloch number is
varied about zero \cite{S1,S2,JZ1,JZ2,JZN,JNRZ1}.
This problem was resolved definitively by Sandstede and Scheel in \cite{SS}, essentially closing the
question of large-period periodic stability in the case of a spectral gap.

However, there are interesting cases arising in systems with conservation laws,
notably for models of elasticity and thin film flow \cite{OZ1,OZ2,JZN,BJNRZ3}
of families of periodic waves for which the spectral gap condition is not satisfied
in the homoclinic limit, $\lambda=0$ being an eigenvalue embedded in the essential spectrum.
In particular, for thin film flows, the homoclinic limit typically has unstable
essential spectrum branching from the origin, and it is spectra bifurcating from
this essential spectrum rather than the embedded eigenvalue at $\lambda=0$ that appears to
dominate the stability behavior of nearby periodic waves; see the discussion of \cite{BJNRZ4,BJRZ}.
This motivates our study here of convergence in the vicinity of essential spectra, both to 
essential spectra themselves and to eigenvalues embedded in essential spectrum,
to neither of which cases Gardner's original analysis applies.

Recall \cite{He,GZ} that the essential spectrum of a homoclinic traveling wave is given by the
union of algebraic curves $\lambda=\lambda_j(k)$ obtained from the dispersion relation of the
(constant-coefficient) linearization of the governing evolution equation about the endstate 
$u_\infty=\lim_{x\to \pm \infty} \bar u(x)$ of the 
homoclinic profile $\bar u(\cdot)$, where $k\in \R$ denotes Fourier frequency,
corresponding to the (entirely essential) spectra of the constant solution $u(x,t)\equiv u_\infty$.
Thus, the generic situation in the context of essential spectrum, analogous to an isolated eigenvalue in
the point spectrum context considered by Gardner, is a point $\lambda$ lying
on a single curve $\lambda_j$, corresponding to a single nondegenerate root $k_*$ of $\lambda_j(k)=0$.
This is closer in nature to the (also entirely essential) spectra of periodic waves than is the case of
an isolated eigenvalue; indeed, it is the spectra of the constant periodic solution $u(x,t)\equiv u_0$,
with Fourier frequency $k$ corresponding to $\gamma$-value $\gamma=e^{ikX^\eps}$ in the 
notation of Section \ref{s:prelims}.

Similarly as in \cite{G2}, our analysis is carried out by examination
of the associated periodic Evans functions $E^\eps(\lambda, \gamma)$
introduced by Gardner \cite{G1}, $\lambda, \gamma \in \CC$, $|\gamma|=1$,
an analytic function whose zeroes $\lambda$ coincide with the spectrum
of the linearized operator about the wave, where $\eps\to \RR$ indexes
the family of periodic waves converging as $\eps\to 0$ to a homoclinic,
or solitary wave, profile.
However, differently from the approach of \cite{G1,G2}, our results are obtained not by topological considerations,
but, similarly in \cite{Z1,Z2,JZ3}, by demonstration of convergence, at exponential rate
$O(e^{-\eta X^\eps})$, of a suitably rescaled version of
the sequence of periodic Evans functions $E^\eps(\lambda, \gamma)$ and a sequence of functions
interpolating between different versions of the homoclinic Evans function $D^0(\lambda)$ 
defined on various components of the complement of the union of curves $\lambda_j(\cdot)$ composing the 
homoclinic essential spectrum, with transition zones of scale $\sim 1/X^\eps\to 0$ around isolated points
$\lambda=\lambda_j(k)$.

Away from the homoclinic essential spectrum, this 
reduces to the simpler computation an appropriate renormalization $D^\eps(\lambda, \gamma)$ of $E^\eps$
converges to the homoclinic Evans function $D^0(\lambda)$ at exponential rate, recovering and further 
illuminating the original result of Gardner \cite{G2} that the zero-set of $E^\eps(\cdot, \xi)$ converges for
each $\xi$ to the zero-set of $D^0$, at exponential rate; see Section \ref{s:convergence}.
This convergence is potentially useful in numerical investigations,
as the basis of numerical convergence studies in this singular, hence numerically sensitive, limit.
See, e.g., the applications in \cite{BJNRZ2}, as discussed in \cite[Appendix D, pp. 70--72]{BJNRZ2}. 

Near isolated arcs of curves $\lambda_j$ of the homoclinic essential spectrum, as described above,
for which the bordering homoclinic Evans functions do not vanish (in particular precluding embedded eigenvalues),
we find that the zero-set of $E^\eps$, comprising curves of periodic essential spectrum, converges {\it not to
a single point but to a full arc of $\lambda_j$}, 
and at {\it algebraic rather than exponential rate}; see Section \ref{s:essential}.
In the case of an isolated arc with a single embedded eigenvalue, we find as might be guessed
that the periodic spectra comprise two curves: a loop converging exponentially to the isolated eigenvalue, 
and a curve converging algebraically to the arc $\lambda_j$; see Section \ref{s:embedded}.
The method of analysis is general, and should extend to other, more degenerate cases, at
the expense of further effort/computation.

Our results apply in particular to the Saint Venant equations of inclined shallow water flow studied in \cite{BJNRZ4,BJRZ},
verifying instability of periodic waves in the homoclinic limit by consideration of spectra bifurcating from 
unstable essential spectrum of the limiting homoclinic.  A very interesting open problem 
is to carry out an analysis like that of \cite{SS} determining separately the stability of spectra
bifurcating from the embedded eigenvalue at $\lambda=0$.
An insteresting related problem is to verify the heuristic picture of ``metastable'' behavior conjectured in \cite{BJRZ}, 
deducing stability for large but not infinite-period waves 
based on properties of an essentially unstable homoclinic limit with stable point spectrum.

\section{Preliminaries}\label{s:prelims}

Following Gardner \cite{G2}, we consider a family of periodic traveling-wave solutions 
\be\label{e:tw}
u(x,t)=\bar u^\eps(x-c^\eps t),
\quad
\bar u^\eps(x+X^\eps)=\bar u^\eps(x)
\ee
of a family of PDEs $u_t=\cF^\eps (\partial_x, u)$
with smooth coefficients,
converging as $\eps \to 0$ to a solitary-wave solution
$\bar u^0$, or homoclinic orbit of the associated traveling-wave ODE
$-c^0 \partial_x u=\cF^0(u)$,
as meanwhile $X^\eps\to \infty$.
Taking without loss of generality $c^\eps\equiv 0$ (changing
to co-moving coordinates $\tilde x=x-c^\eps t$), we investigate
stability of the equilibria $\bar u^\eps$, $\cF^\eps(\bar u^\eps)=0$, 
through the study of the spectra $\lambda$ of the associated family of eigenvalue ODEs 
\be \label{e:eig}
\lambda u=L^\eps u:=d\cF(\bar u^\eps)u,
\ee
with an eye toward relating the spectral properties of periodic waves $\bar u^\eps$
as $\eps\to 0$ to those of the limiting homoclinic $\bar u^0$.

Assume as in \cite{G1,G2} that \eqref{e:eig} may be written as
a first-order system
\be\label{e:firstorder}
W'=A^\eps(x,\lambda)W
\ee
in an appropriate phase variable $W$, where $A^\eps$ is analytic in 
$\lambda$, $C^1$ in $x$, and continuous in $\eps$ for $\eps>0$.
Then, the spectrum of the periodic waves $\bar u^\eps$, $\eps>0$ is
made up of essential spectra given \cite{G1} by the union of
$\gamma$-eigenvalues $\lambda$ consisting of zeroes of the
{\it periodic Evans function}
\be\label{pevans}
E^\eps (\lambda, \gamma):=\det(\Psi^\eps(X^\eps,\lambda)-\gamma \Id),
\ee
where $\Psi^\eps(x,\lambda)$ denotes the solution operator of \eqref{e:firstorder},
with $\Psi^\eps(0,\lambda)=\Id$ and $\gamma\in \CC$ with $|\gamma|=1$.
A $\gamma$-eigenvalue of particular importance is the $1$-eigenvalue
$\lambda=0$ associated with eigenfunction $\partial_x \bar u$
corresponding to instantaneous translation, arising through 
translation-invariance of the underlying PDE.
In the case that $\cF^\eps$ is divergence-form, there exist
other important $1$-eigenvalues corresponding to variations
along the manifold of nearby $X^\eps$-periodic solutions,
which in this case has dimension $\dim u + 1>1$ \cite{OZ1,JZ1}.

As shown in a variety of settings (see \cite{S1,S2,JZ1,JZ2,JZN,BJNRZ1,BJNRZ2,JNRZ1}
and references therein), linearized and nonlinear modulational stability
are implied by the properties:

(D1) the multiplicity of the $1$-eigenvalue $\lambda=0$ is equal to
the dimension $d$ of the manifold of nearby $X^\eps$-periodic solutions
(in the typical case considered by Gardner \cite{G2}, $d=1$).

(D2) other than the $1$ eigenvalue $\lambda=0$, there are no other
$\gamma$-eigenvalues with $\Re \lambda \ge 0$.

(D3) parametrizing $\gamma=e^{ikX}$, $\Re \lambda \le -\eta k^2$
for $0\le kX\le 2\pi$, for some $\eta>0$. 

\noindent 
Accordingly, these are the spectral properties that we wish to investigate.
In particular, note that (D3) concerns not only location, but 
curvature of the spectral loop through $\lambda=0$.

Conditions (D1)--(D2) are easily seen to be necessary for 
linearized modulational stability, while condition (D3) implies
a Gaussian rate of time-algebraic decay sufficient to close
a nonlinear iteration; see \cite{S1,S2,OZ2,JZ1,JZ2} for further discussion.

\subsection{Assumptions}\label{s:assumptions}

Loosely following \cite{G2}, we assume, for $|\lambda|\leq M$:
\medskip

(H1) $X^\eps\to \infty$ as $\eps \to 0$.

(H2) $|A^0(x,\lambda)-A^0_\infty(\lambda)|\le C(M)e^{-\nu |x|}$,
for some $C(M),\nu>0$.

(A3) 
$|\bar u^\eps(x)-\bar u^0(x)|\le \delta(\eps)$
for $|x|\le \frac{X^\eps}{2}$, with $\delta(\eps)\to 0$ as $\eps\to 0$.

\medskip
\noindent
In order to obtain the quantitative estimates we require, we augment (A3) with
\be\label{delbd}
\delta(\eps)\le C e^{-\bar\theta X^\eps/2}
\; \hbox{\rm  for some } \;  \bar\theta>0.
\ee

\br\label{h3rmk1}
Condition \eqref{delbd} is an additional assumption beyond those made
in \cite{G2}.
However, it follows from (A3) in the standard case that the vertex
$\bar u_\infty^0=\bar u(\pm\infty)$ of the limiting homoclinic 
is a hyperbolic rest point of the traveling-wave ODE, under the generically satisfied transversality condition that the 
associated Melnikov separation function be full rank with respect to $\eps$, as do (H1)-(H2) as well,
with $\nu=\bar \theta=\alpha$, were $\alpha$ is the minimum growth/decay rate of the linearized equations about
$u_\infty^0$; see
\cite[Prop 5.1 pp. 166-167]{SS}. 
Thus, Gardner's original condition (A3) is the main assumption in practical terms.
\er

\br\label{h3rmk2}
In the planar Hamiltonian traveling-wave ODE setting,
for which all periodics and the limiting homoclinic lie in the same
phase portrait of a single traveling-wave ODE,
setting $\eps$ to be the distance of $\bar u^\eps(\cdot)$ from the saddle-point
$\bar u^0_\infty$,
one may compute more or less explicitly 
that $X^\eps \sim c\log \eps^{-1}$ and $\delta(\eps)\le C\eps$,
with $|\bar u^\eps-\bar u^0|\le C\eps^2$ away from $\bar u^0_\infty$.
This gives another class of interesting examples to which our assumptions apply.
\er

As a consequence of (A3) we obtain for $|\lambda|\le M$, 
$ |A^\eps(x,\lambda)-A^0(x,\lambda)|\le C(M)\delta(\eps) $
as in assumption (iii) of \cite{G2}, p. 152,
yielding together with \eqref{delbd}:

\medskip

(H3)
$|A^\eps(x,\lambda)-A^0(x,\lambda)|\le C(M)
e^{-\bar\theta X^\eps/2}$
for $|\lambda|\le M$, $|x|\le \frac{X^\eps}{2}$, 
and $\bar\theta >0$.

\medskip
\noindent 
Hereafter, we drop the motivating assumption (A3) and work 
similarly as in \cite{G2} with 
hypotheses (H1)--(H3) on the first-order eigenvalue system
\eqref{e:firstorder} alone.

\section{The homoclinic and rescaled periodic Evans functions}\label{s:evandefs}

We begin by formulating the homoclinic and periodic Evans functions following the approach of
\cite{MZ,Z1,Z2}, in a way that is particularly convenient for their comparison.

\subsection[Reduction to constant coefficients]
{Reduction to constant coefficients}\label{s:reduction}

Adapting the asymptotic ODE techniques developed in 
\cite{MZ,PZ,Z1,Z2} for problems on the half-line
(see Appendix \ref{asymptotic}), we obtain the following quantitative
description relating \eqref{e:firstorder} 
to a constant-coefficient version of the homoclinic eigenvalue problem $W'=A^0(x,\lambda)W$.

\bl\label{conjlem}
Assuming (H1)--(H3), for each $\eps\ge 0$, 
there exist in a neighborhood of any $|\lambda_0|\le M$
bounded and uniformly invertible linear transformations $P^\eps_+(x,\lambda)$
and $P_-^\eps(x,\lambda)$ defined on $x\ge 0$ and $x\le 0$, respectively,
analytic in $\lambda$ as functions into $L^\infty [0,\pm\infty)$, such that,
for any $0<\bar \eta<min(\bar{\theta},\nu)$, $\bar \theta$, $\nu$  as in (H2)-(H3),
some $C>0$, and $|x|\le \frac{X^\eps}{2}$,
\begin{equation}
\label{Pdecay} 
P_\pm^\eps(\pm X^\eps/2)=\Id,
\end{equation}
\begin{equation}
\label{cPdecay} 
| (P^\eps-P^0)_\pm |\le C e^{-\bar\eta X^\eps/2}
\quad
\text{\rm for }  \; x\gtrless 0,
\end{equation}
and the change of coordinates $W=:P^\eps_\pm Z$ reduces \eqref{e:firstorder} 
to the constant-coefficient system
\begin{equation}
\label{glimit}
Z'=A^0_\infty Z,
\quad 
\text{\rm for } \; x\gtrless 0
\; \hbox{\rm and } \;
|x|\le \frac{X^\eps}{2}.
\end{equation}
\el

\begin{proof}
Extending $A^\eps(x,\lambda)$ by value $A^0_\infty$
for $|x| > \frac{X^\eps}{2}$, we obtain a modified family of 
coefficient matrices agreeing with $A^\eps$ on $|x|\le \frac{X^\eps}{2}$
and satisfying 
$$
	|(A^\eps(x,\lambda)-A^\eps_\infty ) -(A^0(x,\lambda)-A^0_\infty)|=
	|A^\eps(x,\lambda)-A^0(x,\lambda)| \le C(M) e^{-\bar \theta X^\eps/2}
$$
for $|x|\leq X^\eps/2$, and 
$$
	|(A^\eps(x,\lambda)-A^\eps_\infty ) -(A^0(x,\lambda)-A^0_\infty)|=
	|A^0(x,\lambda)-A^0_\infty | \le C(M) e^{- \nu |x|} 
$$
for $|x|\geq X^\eps/2$, yielding for all $x$ the estimate
\ba\label{a2}
	|(A^\eps(x,\lambda)-A^\eps_\infty ) -(A^0(x,\lambda)-A^0_\infty)| \le C(M)\delta_2(\eps) e^{- \sigma |x|} 
		\ea
for $\delta_2(\eps):=e^{-\bar \eta X^\eps/2}$, $0<\sigma < \min\{\nu, \bar \theta \}-\bar \eta$, and, trivially,
\be\label{a2'}
|A^\eps_\infty  -A^0_\infty|=0\leq  C(M)\delta_2(\eps).
\ee
Likewise, we have
\ba\label{a1}
|A^\eps(x,\lambda)-A^\eps_\infty(\lambda)|&= |A^\eps(x,\lambda)-A^0_\infty(\lambda)|\\
					  & \leq
|A^\eps(x,\lambda)- A^0(x,\lambda)|+ |A^0(x,\lambda)- A^0_\infty(\lambda)|\\
&\leq
2C(M) e^{- \min\{\nu, \bar \theta \} |x|},
\ea
hence also, by $\sigma< \min\{\nu, \bar \theta \}$, evidently
\be\label{weaker}
|A^\eps(x,\lambda)-A^\eps_\infty(\lambda)|= |A^\eps(x,\lambda)-A^0_\infty(\lambda)|\leq 2C(M) e^{- \sigma |x|}.
\ee

Using \eqref{a1} and applying Lemma \ref{conjlemapp}, Appendix \ref{asymptotic}, with $p=\eps$
and $\theta = \min\{\nu, \bar \theta \}$, we obtain 
$| P_\pm^\eps-\Id |\le C e^{- \bar\eta |x|}$ for $x\gtrless 0$.
Moreover, by Remark \ref{pfrmk},
$(P^\eps_\pm) '= A^p P^\eps_\pm  - P^\eps_\pm  A^\eps_\pm$ and $P^\eps_\infty=\Id$,
yielding $(P^\eps_\pm)'=0$ for $|x|\geq X^\eps/2$, and therefore \eqref{Pdecay}.
Finally, using \eqref{a2}-\eqref{a2'}, \eqref{weaker}, we obtain \eqref{cPdecay}
by Lemma \ref{evanslimit}, Appendix \ref{asymptotic}, 
with $p=\eps$, $\delta(p)=\delta_2(\eps)$, and $\theta= \sigma$.
\end{proof}

\subsection{The homoclinic Evans function}\label{s:hom}

Away from a finite set of curves $\lambda_j(k)$ determined
by the dispersion relation $ik\in \sigma(A^0_\infty(\lambda))$,
$k\in \RR$, where $ \sigma$ denotes spectrum, the eigenvalues
of $A^0_\infty$ have nonvanishing real part.
Denote by $\Lambda_r$ the open components of 
$\CC\setminus \{\lambda_j(k)\}$.
We refer to $\Lambda_r$ as the {\it domains of hyperbolicity} 
of $A^0_\infty$.
Denote by $n_r$ the number of negative real part eigenvalues of
$A^0_\infty$.

\begin{definition}[\cite{GZ,BJRZ}]\label{d:hom}
On each domain of hyperbolicity $\Lambda_r$, the homoclinic Evans
function is defined as
\ba\label{e:hevans}
D^0_r(\lambda)&:= 
\frac{ \det (R^-,R^+)|_{x=0}}
{\det (R^-_\infty,R^+_\infty)}
=
\frac{\det (P^0_-R^-_\infty,P^0_+R^+_\infty)}
{\det (R^-_\infty,R^+_\infty)},
\ea
where $R^-_\infty$ is any matrix whose columns are a basis for the unstable subspace
of $A^0_\infty$,
$R^+_\infty$ is any matrix whose columns are a basis for the stable subspace
of $A^0_\infty$, and $R^-(x):=P^0_-(x)e^{A^0_\infty x}R^-_\infty$ and $R^+(x):=
P^0_+(x)e^{A^0_\infty x}R^+_\infty$ are matrices whose columns are bases for the subspaces
of solutions of \eqref{e:firstorder} decaying as $x\to -\infty$
and $x\to +\infty$, respectively.
\end{definition}

Evidently, each $D^0_r$ is analytic in $\lambda$ on $\Lambda_r$, and
vanishes at $\lambda_0\in \Lambda_r$ if and only if $\lambda_0$ is an eigenvalue of
$L^0$. Moreover, it can be shown with in great generality 
that its zeros correspond
in multiplicity with the eigenvalues of $L^0$; see \cite{GZ,MaZ}, 
and references therein.

\subsection{The rescaled periodic Evans function}\label{s:bal}
Following \cite{G2}, we note that, by Abel's formula,
$E^\eps(\lambda, \gamma)$ may be written alternatively as 
$$
E^\eps(\lambda, \gamma)=
\tilde E^\eps(\lambda, \gamma) e^{\int_0^{X^\eps/2} 
\tr A^\eps(\lambda, y) dy},
$$
where
\ba
\tilde E^\eps(\lambda, \gamma)&:=\det (\Psi^\eps(0,\lambda)\Psi^\eps(-X^\eps/2,\lambda)^{-1}
- \gamma \Psi^\eps(0,\lambda)\Psi^\eps(X^\eps/2,\lambda)^{-1})
\ea
is a ``balanced'' periodic Evans function defined symmetrically
about $x=0$ similarly as the homoclinic Evans function.

\begin{definition}\label{e:balD}
On $\Lambda_r$, we define the {\it rescaled balanced
periodic Evans function} as
\be\label{e:rescaled}
D^\eps_r(\lambda, \gamma):=
	e^{-\tr A^0_\infty \Pi_u X^\eps/2} e^{\tr A^0_\infty \Pi_s X^\eps/2} (-\gamma)^{-n_r}
\tilde E^\eps(\lambda, \gamma) ,
\ee
where $\Pi_u$ and $\Pi_s$ denote the unstable and stable eigenprojections
associated with $A^0_\infty(\lambda)$.
\end{definition}

\section{Convergence to isolated point spectra}\label{s:convergence}

We begin by recovering in a particularly direct and simple fashion
the basic result of Gardner \cite{G2} on bifurcation from isolated point spectra;
for related arguments, see \cite{OZ1,SS,Z1}.

\subsection{Convergence as $X^\eps\to \infty$}\label{s:conv}
To show convergence, we first reformulate the homoclinic Evans
function as a Jost-function type determinant
such as appears in the definition of the periodic Evans function,
involving the difference of two matrix-valued solutions.
See \cite{GM,Z3} for related discussion.

\begin{lemma}\label{sublem}
Assuming (H1)--(H3), 
for $\bp L^-_\infty\\L^+_\infty\ep:=(R^-_\infty,R^+_\infty)^{-1}$,
\be\label{altevans}
D^0_r(\lambda)=(-1)^{n_r}\det( R^-L^-_\infty - R^+L^+_\infty)|_{x=0}.
\ee
\end{lemma}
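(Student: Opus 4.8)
The plan is to manipulate the determinant in Definition~\ref{d:hom} purely by column operations, so no analysis is needed beyond the existence of the objects already constructed. Write $R^-=R^-(x)$ and $R^+=R^+(x)$ for the matrices of decaying solutions, so that by definition
\[
D^0_r(\lambda)=\frac{\det(R^-,R^+)|_{x=0}}{\det(R^-_\infty,R^+_\infty)}.
\]
Since $\det(R^-_\infty,R^+_\infty)$ is a nonzero scalar, the idea is to absorb its inverse into the columns of the numerator by right-multiplying the block matrix $(R^-,R^+)$ by $(R^-_\infty,R^+_\infty)^{-1}$, which does not change the value of the quotient because $\det(MN)=\det M\det N$.

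First I would set $\bp L^-_\infty\\ L^+_\infty\ep:=(R^-_\infty,R^+_\infty)^{-1}$, partitioned so that $L^-_\infty$ has $n_r$ rows (matching the $n_r$ columns of $R^-_\infty$, the dimension of the unstable subspace) and $L^+_\infty$ has the complementary number of rows. Then block multiplication gives
\[
(R^-,R^+)\bp L^-_\infty\\ L^+_\infty\ep = R^-L^-_\infty + R^+L^+_\infty,
\]
an $n\times n$ matrix, and hence
\[
\det(R^-,R^+)|_{x=0}=\det(R^-_\infty,R^+_\infty)\,\det\!\big(R^-L^-_\infty + R^+L^+_\infty\big)\big|_{x=0},
\]
so that $D^0_r(\lambda)=\det(R^-L^-_\infty + R^+L^+_\infty)|_{x=0}$. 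This is almost \eqref{altevans}, except for the sign and the minus between the two terms. The minus sign is produced by flipping the sign of the $n_r$ columns coming from $R^+$, equivalently replacing $L^+_\infty$ by $-L^+_\infty$ in the second summand; each sign flip of a column contributes a factor $-1$, and there are (the number of columns of $R^+$) $= n-n_r$ such columns. To instead land exactly on $R^-L^-_\infty - R^+L^+_\infty$ with the stated prefactor $(-1)^{n_r}$, one flips the $n_r$ columns coming from $R^-$ rather than the $n-n_r$ from $R^+$ — this changes the summand $R^-L^-_\infty$ to $-R^-L^-_\infty$ and multiplies the determinant by $(-1)^{n_r}$ — and then factors an overall $(-1)$ out of the whole $n\times n$ matrix is \emph{not} needed; rather, note $\det(-R^-L^-_\infty+R^+L^+_\infty)=(-1)^n\det(R^-L^-_\infty-R^+L^+_\infty)$, and reconcile the powers of $-1$ using $n=n_r+(n-n_r)$. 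I would simply present the cleanest of these bookkeeping routes: flip the first $n_r$ columns to get the factor $(-1)^{n_r}$ and the desired sign pattern, so that
\[
D^0_r(\lambda)=(-1)^{n_r}\det\!\big(R^-L^-_\infty - R^+L^+_\infty\big)\big|_{x=0},
\]
which is \eqref{altevans}.

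The only genuine point to get right is the sign/index bookkeeping — i.e.\ matching the $(-1)^{n_r}$ and the internal minus sign to the block partition of $(R^-_\infty,R^+_\infty)^{-1}$ and to which set of columns one negates; everything else is the trivial identity $\det(MN)=\det M\det N$ together with the second expression for $D^0_r$ in Definition~\ref{d:hom} (in the form not involving the conjugators $P^0_\pm$, although one could equally carry the $P^0_\pm$ through since they act by left multiplication and drop out of the ratio in exactly the same way). I expect this to be short; the main "obstacle" is purely cosmetic, namely choosing conventions for the partition and the column operations that make the stated formula come out with no leftover sign discrepancy.
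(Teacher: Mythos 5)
Your overall strategy is the same as the paper's one-line proof: factor out $(R^-_\infty,R^+_\infty)^{-1}$, use multiplicativity of the determinant, and account for the signs by column flips. However, the sign/index bookkeeping --- which you yourself identify as the only nontrivial point --- is where the write-up goes wrong, because you misidentify $n_r$. The paper defines $n_r$ as the number of \emph{negative} real part eigenvalues of $A^0_\infty$, i.e., the dimension of the \emph{stable} subspace; hence it is $R^+_\infty$ (and $R^+$) that has $n_r$ columns, while $R^-_\infty$ has $n-n_r$, the opposite of your partition (``$L^-_\infty$ has $n_r$ rows, matching the $n_r$ columns of $R^-_\infty$''). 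With the correct count, the clean route is exactly the paper's: write
\[
R^-L^-_\infty - R^+L^+_\infty \;=\; (R^-,\,-R^+)\bp L^-_\infty\\ L^+_\infty\ep \;=\; (R^-,\,-R^+)\,(R^-_\infty,R^+_\infty)^{-1},
\]
so that, since $-R^+$ accounts for exactly $n_r$ sign-flipped columns,
\[
\det\big(R^-L^-_\infty - R^+L^+_\infty\big)\big|_{x=0} \;=\; (-1)^{n_r}\,\frac{\det(R^-,R^+)|_{x=0}}{\det(R^-_\infty,R^+_\infty)} \;=\; (-1)^{n_r}\,D^0_r(\lambda),
\]
which gives \eqref{altevans} upon multiplying by $(-1)^{n_r}$.

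By contrast, your ``cleanest route'' of flipping the columns coming from $R^-$ produces $-R^-L^-_\infty + R^+L^+_\infty$, which has the \emph{opposite} internal sign pattern from the target; converting it costs an extra factor $(-1)^n$, and carrying your own convention (unstable dimension $=n_r$) through the computation yields $D^0_r=(-1)^{\,n-n_r}\det(R^-L^-_\infty-R^+L^+_\infty)|_{x=0}$. This agrees with the stated prefactor $(-1)^{n_r}$ only because $n-n_r$ in your convention is precisely what the paper calls $n_r$ (or, staying within your convention, only when $n$ is even). Your proposal is also internally inconsistent on this point, asserting both that the $R^+$ block contributes ``$n_r$ columns'' and that it has ``$n-n_r$'' columns, and the final assertion is stated without actually performing the promised reconciliation of powers of $-1$. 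The fix is small --- identify $n_r$ with the number of columns of $R^+_\infty$ and apply the sign flip to the $R^+$ block --- after which your argument collapses to the paper's proof verbatim.
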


\begin{proof}
Factoring
$( R^-L^-_\infty - R^+L^+_\infty)=
( R^-,-R^+)\bp L^-_\infty\\ L^+_\infty \ep
=
( R^-,-R^+)(R^-_\infty,  R^+_\infty )^{-1}
$,
taking determinants, and comparing to \eqref{e:hevans}, we obtain the result.
\end{proof}

\begin{proposition} \label{c:conv}
Assuming (H1)--(H3), on each compact $K\subset \Lambda_r$,
there exist $C,\theta>0$ such that
\be\label{e:Dconv}
|D^\eps_r(\lambda,\gamma) -D^0_r(\lambda)|\le Ce^{-\eta X^\eps/2}
\; 
	\hbox{\rm for all $\lambda \in K$, $|\gamma|=1$},
\ee
for any $\eta$ less than the minimum of $\bar \eta$, given in Lemma \ref{conjlem}, and the spectral gap 
of $A^0_\infty(\lambda)$, defined as the minimum absolute value of the real parts of the eigenvalues of $A^0_\infty$.
\end{proposition}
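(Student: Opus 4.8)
The plan is to push everything through the conjugating transformations $P^\eps_\pm$ of Lemma~\ref{conjlem}, rewrite the periodic Evans function as a determinant of the same shape as the homoclinic one, and then read off the rate from the two exponential scales in play: the spectral-gap scale $e^{-\alpha X^\eps/2}$ of $A^0_\infty$ and the scale $e^{-\bar\eta X^\eps/2}$ at which $(P^\eps-P^0)_\pm$ decays. First I would make the transfer matrices explicit. On $[0,X^\eps/2]$ the substitution $W=P^\eps_+Z$ of Lemma~\ref{conjlem} turns \eqref{e:firstorder} into $Z'=A^0_\infty Z$, so $x\mapsto P^\eps_+(x)e^{A^0_\infty x}$ is a fundamental matrix there; together with $\Psi^\eps(0,\lambda)=\Id$ and $P^\eps_+(X^\eps/2)=\Id$ this gives $\Psi^\eps(0,\lambda)\Psi^\eps(X^\eps/2,\lambda)^{-1}=P^\eps_+(0)e^{-A^0_\infty X^\eps/2}$, and the mirror computation on $[-X^\eps/2,0]$ with $P^\eps_-(-X^\eps/2)=\Id$ gives $\Psi^\eps(0,\lambda)\Psi^\eps(-X^\eps/2,\lambda)^{-1}=P^\eps_-(0)e^{A^0_\infty X^\eps/2}$. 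Hence
\[
\tilde E^\eps(\lambda,\gamma)=\det\bigl(P^\eps_-(0)e^{A^0_\infty X^\eps/2}-\gamma P^\eps_+(0)e^{-A^0_\infty X^\eps/2}\bigr).
\]

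Next I would identify the scalar rescaling of Definition~\ref{e:balD} as a genuine matrix determinant. With $\mu:=X^\eps/2$, set $D_\mu:=e^{-A^0_\infty\mu}\Pi_u+e^{A^0_\infty\mu}\Pi_s$; hyperbolicity on $\Lambda_r$ gives $\Pi_u+\Pi_s=\Id$, and in a basis adapted to the splitting $\ran\Pi_u\oplus\ran\Pi_s$ one reads off $\det D_\mu=e^{-\tr(A^0_\infty\Pi_u)\mu}\,e^{\tr(A^0_\infty\Pi_s)\mu}$, which is exactly the prefactor in \eqref{e:rescaled}. Therefore $D^\eps_r(\lambda,\gamma)=(-\gamma)^{-n_r}\det\bigl((P^\eps_-(0)e^{A^0_\infty\mu}-\gamma P^\eps_+(0)e^{-A^0_\infty\mu})D_\mu\bigr)$. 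Since $e^{\pm A^0_\infty\mu}$, $\Pi_u$, $\Pi_s$ all commute, $e^{A^0_\infty\mu}D_\mu=\Pi_u+e^{2A^0_\infty\mu}\Pi_s$ and $e^{-A^0_\infty\mu}D_\mu=\Pi_s+e^{-2A^0_\infty\mu}\Pi_u$, so the matrix inside the determinant equals $\bigl(P^\eps_-(0)\Pi_u-\gamma P^\eps_+(0)\Pi_s\bigr)+\mathcal R_\mu$ with $\mathcal R_\mu:=P^\eps_-(0)e^{2A^0_\infty\mu}\Pi_s-\gamma P^\eps_+(0)e^{-2A^0_\infty\mu}\Pi_u$. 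By the definition of the spectral gap $\alpha$ of $A^0_\infty$, $\|e^{2A^0_\infty\mu}\Pi_s\|$ and $\|e^{-2A^0_\infty\mu}\Pi_u\|$ are $O(e^{-2(\alpha-\delta)\mu})$ for every $\delta>0$, so, using $|\gamma|=1$ and the uniform bound on $P^\eps_\pm(0)$ from Lemma~\ref{conjlem}, $\|\mathcal R_\mu\|\le Ce^{-2(\alpha-\delta)\mu}$ uniformly for $\lambda\in K$, $|\gamma|=1$, and $\eps$ small.

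Finally I would conclude by a determinant perturbation plus two algebraic simplifications. All the matrices above lie in a fixed bounded set, on which $\det$ is Lipschitz, so $D^\eps_r(\lambda,\gamma)=(-\gamma)^{-n_r}\det\bigl(P^\eps_-(0)\Pi_u-\gamma P^\eps_+(0)\Pi_s\bigr)+O(e^{-2(\alpha-\delta)\mu})$. In the adapted basis, $P^\eps_+(0)\Pi_s$ has nonzero entries only in its $n_r$ ``stable'' columns, so the factor $-\gamma$ comes out of exactly $n_r$ columns and $(-\gamma)^{-n_r}\det\bigl(P^\eps_-(0)\Pi_u-\gamma P^\eps_+(0)\Pi_s\bigr)=(-1)^{n_r}\det\bigl(P^\eps_-(0)\Pi_u-P^\eps_+(0)\Pi_s\bigr)$, now $\gamma$-free. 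Replacing $P^\eps_\pm(0)$ by $P^0_\pm(0)$ via \eqref{cPdecay} and Lipschitz continuity of $\det$ costs a further $O(e^{-\bar\eta\mu})$; and since $R^\pm(0)=P^0_\pm(0)R^\pm_\infty$ while $R^-_\infty L^-_\infty=\Pi_u$ and $R^+_\infty L^+_\infty=\Pi_s$ (these are the spectral projections because $L^\pm_\infty$ are the row blocks of $(R^-_\infty,R^+_\infty)^{-1}$), Lemma~\ref{sublem} gives $(-1)^{n_r}\det\bigl(P^0_-(0)\Pi_u-P^0_+(0)\Pi_s\bigr)=(-1)^{n_r}\det\bigl(R^-(0)L^-_\infty-R^+(0)L^+_\infty\bigr)=D^0_r(\lambda)$. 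Adding the two errors, $|D^\eps_r(\lambda,\gamma)-D^0_r(\lambda)|\le C\bigl(e^{-\bar\eta X^\eps/2}+e^{-2(\alpha-\delta)X^\eps/2}\bigr)\le Ce^{-\eta X^\eps/2}$ for $\delta$ small and any $\eta<\min(\bar\eta,\alpha)$, uniformly in $\lambda\in K$ and $|\gamma|=1$.

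The step carrying the real content is recognizing the ad hoc rescaling of Definition~\ref{e:balD} as $\det D_\mu$ for the block scaling $D_\mu$: this is precisely what replaces the large and small exponentials $e^{\pm A^0_\infty X^\eps/2}$ by the bounded $O(1)$ matrices $\Pi_u,\Pi_s$ plus a remainder small at the gap rate, and it is where the exponential decay in the conclusion is produced. Everything else is routine: the uniform boundedness, invertibility, and $\lambda$-analyticity furnished by Lemma~\ref{conjlem} make all constants uniform over the compact $K$; the determinant is locally Lipschitz; and the polynomial-in-$\mu$ prefactors that can arise when $A^0_\infty$ is not semisimple are harmless since $\eta$ is taken strictly below $\min(\bar\eta,\alpha)$.
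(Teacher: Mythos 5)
Your argument is correct and follows essentially the same route as the paper: conjugate to constant coefficients via Lemma \ref{conjlem}, use $P^\eps_\pm(\pm X^\eps/2)=\Id$ to write $\Psi^\eps(0,\lambda)\Psi^\eps(\mp X^\eps/2,\lambda)^{-1}=P^\eps_\mp(0)e^{\pm A^0_\infty X^\eps/2}$, split by the stable/unstable spectral projections of $A^0_\infty$ discarding terms small at the spectral-gap rate, cancel the rescaling prefactor, replace $P^\eps_\pm(0)$ by $P^0_\pm(0)$ at cost $O(e^{-\bar\eta X^\eps/2})$ via \eqref{cPdecay}, and identify the limit with $D^0_r$ through the Jost-type formula of Lemma \ref{sublem}. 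The only difference is bookkeeping: you cancel the prefactor by inserting $D_\mu=e^{-A^0_\infty\mu}\Pi_u+e^{A^0_\infty\mu}\Pi_s$ (whose determinant is exactly that prefactor) inside the determinant, whereas the paper factors the leading matrix into a product of three determinants and cancels against the middle one—the content is the same.
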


\br\label{sharprate}
In the generic case discussed in Remark \ref{h3rmk1}, the restrictions on $\eta$ in Proposition \ref{c:conv}
reduce to $0<\eta<$ spectral gap of $A^0_\infty(\lambda)$, by the estimates of \cite[Prop. 5.1]{SS}.
\er

\begin{proof}
Using the description of Lemma \ref{conjlem}, 
and the spectral expansion formula
$$
e^{A^0_\infty x}= 
e^{A^0_\infty \Pi_u x} R^-_\infty L^-_\infty 
+e^{A^0_\infty  \Pi_s x} R^+_\infty L^+_\infty ,
$$
we find, using $P^\eps_\pm(\pm X^\eps/2)=\Id$ 
(see \eqref{Pdecay}) 
and $ |e^{A^0_\infty  \Pi_s X^\eps/2} | \le Ce^{-\eta X^\eps/2}$, that
$$
\begin{aligned}
\Psi^\eps(0,\lambda)\Psi^\eps(-X^\eps/2,\lambda)^{-1}&=
P^\eps_-(0) \Big(P^\eps_-(-X^\eps/2)e^{-A^0_\infty X^\eps/2}\Big)^{-1}\\
&=
P^\eps_-(0) e^{A^0_\infty X^\eps/2} 
\\
&=
P^\eps_-(0) e^{A^0_\infty \Pi_u X^\eps/2} R^-_\infty L^-_\infty 
+O(e^{-\eta X^\eps/2})
\end{aligned}
$$
and, likewise, 
$
\Psi^\eps(0,\lambda)\Psi^\eps(X^\eps/2,\lambda)^{-1}=
P^\eps_+(0) 
e^{-A^0_\infty \Pi_s X^\eps/2} R^+_\infty L^+_\infty
+O(e^{-\eta X^\eps/2}),
$
from which we find, factoring similarly as in the proof of 
Lemma \ref{sublem}, that
\begin{equation}\label{exp}
\begin{aligned}
D^\eps_r(\lambda, \gamma)&= 
	e^{-\tr A^0_\infty \Pi_u X^\eps/2} e^{\tr A^0_\infty \Pi_s X^\eps/2} (-\gamma)^{-n_r}
\det \bp P^\eps_-(0)R^-_\infty & P^\eps_+(0)R^+_\infty \ep \\
	&\quad \times
	\det \Big(
	\bp L^-_\infty e^{A^0_\infty \Pi_u X^\eps/2}R^-_\infty & 0\\ 0 &-\gamma L^+_\infty e^{-A^0_\infty \Pi_s X^\eps/2}R^+_\infty \ep
	+O(e^{-\eta X^\eps/2})
	\Big)
	\\
	&\quad \times
	\det \bp L^-_\infty 
	\\ L^+_\infty 
	\ep 
	\\
	&=
\det \bp P^\eps_-(0)R^-_\infty & P^\eps_+(0)R^+_\infty \ep 
	\det ( \Id + O(e^{-\eta X^\eps}))
	\det \bp L^-_\infty 
	\\ L^+_\infty 
	\ep 
	\\
	&= D^0_r(\lambda)+O(e^{-\eta X^\eps/2}),
\end{aligned}
\end{equation}
by 
$\bp L^-_\infty \\L^+_\infty\ep:=\bp R^-_\infty & R^+_\infty\ep^{-1}$
and the definition of $D^0_r$ in \eqref{e:hevans}.
\end{proof}

\begin{cor}[\cite{G2,SS,OZ1}]\label{c:loops}
Assuming (H1)--(H3), on compact
$K\subset \Lambda_r$ such that $D^0_r$ does not vanish on $\partial K$,
the spectra of $L^\eps$ for $X^\eps$ sufficiently large
consists of loops of spectra $\lambda_{r,k}^\eps(\gamma)$,
$k=1,\dots, m_r$, within $O(e^{-\eta X^\eps/2m_r})$ of the 
eigenvalues $\lambda_r$ of $L^0$, where $m_r$ denotes the multiplicity
of $\lambda_r$
and $\eta$ is as in \eqref{e:Dconv}.
\end{cor}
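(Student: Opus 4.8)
The plan is to deduce Corollary \ref{c:loops} from the Evans-function convergence estimate \eqref{e:Dconv} by a routine Rouch\'e-type argument, keeping careful track of the $\gamma$-dependence so that the conclusion is uniform over $|\gamma|=1$. First I would fix a compact $K\subset\Lambda_r$ on whose boundary $D^0_r$ does not vanish, and let $\delta_0:=\min_{\partial K}|D^0_r|>0$ and $\delta_1:=\min_{\partial K'}|D^0_r|$ on a slightly enlarged compact $K'$ chosen so that the only zero of $D^0_r$ inside $K'$ that lies in $K$ is $\lambda_r$, of multiplicity $m_r$ (shrinking $K$ about $\lambda_r$ if necessary, since away from the finitely many zeros of $D^0_r$ Proposition \ref{c:conv} already forces $D^\eps_r(\cdot,\gamma)$ to be nonvanishing for $X^\eps$ large, recovering the ``no spurious spectrum'' half of the statement). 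Since $D^0_r$ is analytic on $\Lambda_r$ and not identically zero, its zeros are isolated, so such a $K$ exists.

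Next I would invoke \eqref{e:Dconv}: there are $C,\eta>0$ with $|D^\eps_r(\lambda,\gamma)-D^0_r(\lambda)|\le Ce^{-\eta X^\eps/2}$ for all $\lambda\in K'$ and all $|\gamma|=1$. Hence for $X^\eps$ large enough that $Ce^{-\eta X^\eps/2}<\delta_0$, Rouch\'e's theorem applied on $\partial K$ (for each fixed $\gamma$, in the variable $\lambda$, using that $D^\eps_r(\cdot,\gamma)$ and $D^0_r$ are analytic in $\lambda$ on a neighborhood of $\overline K$ by Lemma \ref{conjlem} and Definitions \ref{e:balD}, \ref{d:hom}) shows $D^\eps_r(\cdot,\gamma)$ has exactly $m_r$ zeros in $K$, counted with multiplicity. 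This gives the $m_r$ branches $\lambda_{r,k}^\eps(\gamma)$. The threshold on $X^\eps$ depends only on $C,\eta,\delta_0$, not on $\gamma$, so the count is uniform in $\gamma$; since the zeros of $D^\eps_r(\cdot,\gamma)$ are precisely the $\gamma$-eigenvalues of $L^\eps$ (note the rescaling prefactor in \eqref{e:rescaled} is a nonvanishing analytic function of $\lambda$, so it does not affect the zero set), taking the union over $|\gamma|=1$ identifies the spectrum of $L^\eps$ in $K$ with the stated loops.

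For the distance estimate $O(e^{-\eta X^\eps/2m_r})$ I would localize further: choose $K$ to be a small closed disc of radius $\rho$ about $\lambda_r$ on which $\lambda_r$ is the only zero of $D^0_r$. Near $\lambda_r$ write $D^0_r(\lambda)=(\lambda-\lambda_r)^{m_r}g(\lambda)$ with $g$ analytic and nonvanishing on $\overline K$, so $|D^0_r(\lambda)|\ge c|\lambda-\lambda_r|^{m_r}$ there for some $c>0$. If $\lambda_{r,k}^\eps(\gamma)$ is a zero of $D^\eps_r(\cdot,\gamma)$ in $K$, then $c|\lambda_{r,k}^\eps(\gamma)-\lambda_r|^{m_r}\le|D^0_r(\lambda_{r,k}^\eps(\gamma))|=|D^0_r(\lambda_{r,k}^\eps(\gamma))-D^\eps_r(\lambda_{r,k}^\eps(\gamma),\gamma)|\le Ce^{-\eta X^\eps/2}$, whence $|\lambda_{r,k}^\eps(\gamma)-\lambda_r|\le (C/c)^{1/m_r}e^{-\eta X^\eps/2m_r}$, which is the claimed bound after absorbing constants. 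The main (and only real) obstacle is the uniformity in $\gamma$: one must be sure that the constants $C,\eta$ in \eqref{e:Dconv}, and hence the required lower bound on $X^\eps$, are independent of $\gamma\in\{|\gamma|=1\}$ — which they are, since that estimate was proved with all $O(e^{-\eta X^\eps/2})$ terms controlled uniformly over the compact circle $|\gamma|=1$ — together with the minor bookkeeping that analyticity in $\lambda$ is used (not analyticity in $\gamma$), so Rouch\'e is applied separately for each $\gamma$ and the uniform count then lets one take the union.
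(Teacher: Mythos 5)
Your proposal is correct and is exactly the argument the paper intends: its proof of Corollary \ref{c:loops} is the one-line remark ``Immediate by properties of analytic functions (Rouch\'es Theorem),'' i.e., Rouch\'e applied to the uniform-in-$\gamma$ estimate \eqref{e:Dconv}. Your write-up simply fills in the standard details (zero count via Rouch\'e, the factorization $D^0_r(\lambda)=(\lambda-\lambda_r)^{m_r}g(\lambda)$ giving the $O(e^{-\eta X^\eps/2m_r})$ rate, and uniformity in $\gamma$), all consistent with the paper's approach.
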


\begin{proof}
Immediate by properties of analytic functions (Rouch\'es Theorem).
\end{proof}

\br \label{manyD}
\textup{
Note that different rescalings of $\tilde{E}^\eps$ converge as $X^\eps\to \infty$
to different versions $D^0_r$ of the homoclinic Evans function on different
components $\Lambda_r$.
}
\er

\subsection{A flip-type stability index and behavior near $\lambda=0$}\label{s:translational}
We mention in passing the important special case 
of an isolated eigenvalue at $\lambda=0$ of the limiting homoclinic wave,
corresponding with translational invariance of the underlying PDE.
Similar translational ($\gamma=1$)-eigenvalues occur at $\lambda=0$ for periodic waves of all periods $X^\eps$.
As shown in \cite{SS} by rather different Melnikov integral/Lyapunov-Schmidt computations,
this exact correspondence for $\gamma=1$ implies cancellation in $D^\eps_r -D^0_r$,
yielding convergence at faster exponential rate $O(e^{-\alpha \eta X^\eps/2})$, where $\alpha>1$,
and also an asymptotic description of the location of $(\gamma\neq 1$)-eigenvalues near $\lambda=0$, deciding 
diffusive spectral stability of spectral loops passing through the origin.
In typical cases, these loops are to lowest order in $|\lambda-\lambda_*|$
ellipses with axes parallel to real and imaginary coordinate axes,
hence their diffusive stability or instability is decided by whether the ($\gamma=-1)$-eigenvalue 
lies in the stable ($\Re \lambda <0$) or unstable ($\Re \lambda>0$) half-space \cite[Discussion, p. 182, par. 2]{SS}.

These conclusions do not follow by our straightforward computations above, and indeed would
appear to be cumbersome to reproduce by such an Evans function approach.
However, a related {\it necessary} condition $\sigma\geq 0$ based on the stability index
\be\label{stabindex}
\sigma:= \sgn E^\eps(0, -1) \sgn E^\eps(\infty_{real},-1),
\ee
is readily obtained from the Evans function formulation by the observation that
$E^\eps(\lambda,-1)$ is real-valued for $\lambda$ real, hence $\delta\leq 0$ by the intermediate value theorem
implies existence of a $-1$-eigenvalue with nonnegative real part, violating diffusive stability conditions
(D2)-(D3).

The necessary condition \eqref{stabindex} is valid in much more general contexts than is the necessary and 
sufficient condition obtained in \cite{SS}, in particulat to systems with conservation laws for which $\lambda=0$
is an embedded eigenvalue of the limiting homoclinic wave.
See, for example, \cite[Thm 1.9]{JNRYZ} for an important application of this principle
in the case of the Saint Venant equations of inclined shallow water flow.
A change of sign in $\delta$ corresponds to passage of a $-1$-eigenvalue through $\lambda=0$, or ``flip'' bifurcation
in the periodic traveling-wave ODE.

\section{Convergence to essential spectra}\label{s:essential}

We now turn to our main object, of bifurcation from
essential homoclinic spectra of periodic spectra in the large-period limit.
Recall that the essential spectrum of the homoclinic limit is given by the union
of curves $\lambda_j(k)$ determined by the dispersion relation $ik\in \sigma(A^0_\infty(\lambda))$,
$k\in \RR$, bounding domains of hyperbolicity $\Lambda_r$
with Evans functions $D^0_r$.

\subsection{Convergence to an isolated arc of essential spectra}\label{s:arc}
Consider the generic situation of
a point of homoclinic essential spectrum $\lambda_*=\lambda_j(k_*)$ lying on a single curve $\lambda_j$, for
which $k_*$ is a nondegenerate root and unique solution of $\lambda_j(k)=\lambda_*$.
Without loss of generality, suppose that $\lambda_j$ separates domains of hyperbolicity $\Lambda_1$ and
$\Lambda_2$, on which are defined homoclinic Evans functions $D^0_1$ and $D^0_2$, as described in \eqref{e:hevans}.

By assumption, $\mu_* :=ik_*$ is a simple imaginary eigenvalue of $A^0_\infty(\lambda_*)$, 
with all other eigenvalues of strictly positive or negative real part.
By standard matrix perturbation theory, therefore, there exist near $\lambda_*$
an analytic eigenvalue $\mu_c(\lambda)$ and associated eigenprojecton $\Pi_c(\lambda)$
with $\mu_c(\lambda_*)=ik_*$ and analytic strongly stable and unstable eigenprojections $\Pi_{ss}$ and $\Pi_{su}$,
corresponding at $\lambda=\lambda_*$ to center, stable, and unstable projections of $A^0_\infty(\lambda_*)$.
Without loss of generality, suppose that $\Re \mu_c<0$ on $\Lambda_2$ and
$\Re \mu_c>0$ on $\Lambda_1$.
By analyticity of $\Pi_c$ , $\Pi_{ss}$, and $\Pi_{su}$, 
the homoclinic Evans functions $D^0_1$ and $D^0_2$ extend analytically to a neighborhood of  $\lambda_*$; 
denote their values at $\lambda_*$ as $d_1=D^0_1(\lambda_*)$ and $d_2=D^0_2(\lambda_*)$.

\begin{definition}\label{e:blowup}
Near $\lambda_*$, we define the {\it transitional periodic Evans function} as
\be\label{e:transper}
\tilde D^\eps(\lambda, \gamma):=
e^{-\tr A^0_\infty \Pi_{su} X^\eps/2}
e^{\tr A^0_\infty \Pi_{ss} X^\eps/2}
(-\gamma)^{-n_1} \tilde E(\lambda, \gamma) ,
\ee
where $\Pi_{su}$ and $\Pi_{ss}$ denote strongly unstable and stable eigenprojections
of $A^0_\infty(\lambda)$ ($n_1$ here corresponding to $\dim \Range \Pi_{ss}$), and
the transitional homoclinic Evans function as
\be\label{e:transhom}
\tilde D^0(\lambda, \gamma):=
e^{\mu_c(\lambda) X^\eps/2} D^0_1(\lambda) -\gamma e^{-\mu_c(\lambda) X^\eps/2}D^0_2(\lambda) .
\ee
\end{definition}

\begin{proposition}\label{main}
	Assuming (H1)--(H3), let $\lambda_*=\lambda_j(k_*)$ lie on 
	an isolated arc of homoclinic essential spectra as described above.
Then, there exists $C_1>0$ such that for any $C>0$, for $X^\eps$ sufficiently large, 
\be\label{e:Dtildeconv}
|\tilde D^\eps(\lambda,\gamma) - \tilde D^0(\lambda,\gamma)|\le Ce^{-\eta X^\eps}/2, \, \eta>0, \;
	\hbox{\rm for all $\lambda \in B(\lambda_*, C/X^\eps)$,  $|\gamma|=1$}.
\ee
\end{proposition}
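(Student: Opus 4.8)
The plan is to repeat the factorization argument used for Proposition~\ref{c:conv}, but with the hyperbolic splitting of $A^0_\infty(\lambda)$ replaced by the finer \emph{trichotomy} into strongly unstable, one-dimensional center, and strongly stable parts available near $\lambda_*$ by analytic matrix perturbation theory, carrying the center eigenvalue $\mu_c(\lambda)$ along explicitly rather than estimating it away. First I would use Lemma~\ref{conjlem} together with \eqref{Pdecay} to record the \emph{exact} identities $\Psi^\eps(0,\lambda)\Psi^\eps(-X^\eps/2,\lambda)^{-1}=P^\eps_-(0)e^{A^0_\infty X^\eps/2}=:A$ and $\Psi^\eps(0,\lambda)\Psi^\eps(X^\eps/2,\lambda)^{-1}=P^\eps_+(0)e^{-A^0_\infty X^\eps/2}=:B$ (valid since $0$ and $\pm X^\eps/2$ lie in the domains of $P^\eps_\mp$, so no error is incurred at this step). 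I then fix an analytic, uniformly bounded, uniformly invertible matrix $\hat R=(R^{su}_\infty\,|\,R^c_\infty\,|\,R^{ss}_\infty)$ near $\lambda_*$ whose column blocks span $\Range\Pi_{su}$, $\Range\Pi_c$, $\Range\Pi_{ss}$, with $\hat R^{-1}$ having row blocks $L^{su}_\infty,L^c_\infty,L^{ss}_\infty$, and note that $e^{\pm A^0_\infty x}$ preserves each of these subspaces and acts on $\Range\Pi_c$ as the scalar $e^{\pm\mu_c(\lambda)x}$.

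Next I would compute $\tilde E^\eps(\lambda,\gamma)=\det(A-\gamma B)=\det\big((A-\gamma B)\hat R\big)/\det\hat R$ column-block by column-block. On the $su$-block one gets $(A-\gamma B)R^{su}_\infty=P^\eps_-(0)R^{su}_\infty M_{su}-\gamma P^\eps_+(0)R^{su}_\infty M_{su}^{-1}$, where $M_{su}:=L^{su}_\infty e^{A^0_\infty X^\eps/2}R^{su}_\infty$ has $\det M_{su}=e^{(\tr A^0_\infty\Pi_{su})X^\eps/2}$ and $\|M_{su}^{-1}\|\le Ce^{-\eta X^\eps/2}$ by strong hyperbolicity; on the $ss$-block, $(A-\gamma B)R^{ss}_\infty=P^\eps_-(0)R^{ss}_\infty M_{ss}^{-1}-\gamma P^\eps_+(0)R^{ss}_\infty M_{ss}$ with $M_{ss}:=L^{ss}_\infty e^{-A^0_\infty X^\eps/2}R^{ss}_\infty$, $\det M_{ss}=e^{-(\tr A^0_\infty\Pi_{ss})X^\eps/2}$, $\|M_{ss}^{-1}\|\le Ce^{-\eta X^\eps/2}$; and on the single center column, exactly $e^{\mu_c X^\eps/2}P^\eps_-(0)R^c_\infty-\gamma e^{-\mu_c X^\eps/2}P^\eps_+(0)R^c_\infty$. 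Pulling $M_{su}$ out of the $su$-block and $-\gamma M_{ss}$ out of the $ss$-block on the right — the two leftover cross terms $-\gamma P^\eps_+(0)R^{su}_\infty M_{su}^{-2}$ and $-\gamma^{-1}P^\eps_-(0)R^{ss}_\infty M_{ss}^{-2}$ being $O(e^{-\eta X^\eps})$ — taking determinants, multiplying by the rescaling prefactor of Definition~\ref{e:blowup}, which is designed exactly so that $e^{-(\tr A^0_\infty\Pi_{su})X^\eps/2}e^{(\tr A^0_\infty\Pi_{ss})X^\eps/2}(-\gamma)^{-n_1}\cdot\det M_{su}\cdot(-\gamma)^{n_1}\det M_{ss}=1$ (with $n_1=\dim\Range\Pi_{ss}$), and expanding the remaining determinant multilinearly in the center column, I arrive at
\[
\tilde D^\eps(\lambda,\gamma)=\big[e^{\mu_c X^\eps/2}\det(\hat P^{su}_-,\,P^\eps_-(0)R^c_\infty,\,\hat P^{ss}_+)-\gamma e^{-\mu_c X^\eps/2}\det(\hat P^{su}_-,\,P^\eps_+(0)R^c_\infty,\,\hat P^{ss}_+)\big]/\det\hat R,
\]
with $\hat P^{su}_-=P^\eps_-(0)R^{su}_\infty+O(e^{-\eta X^\eps})$ and $\hat P^{ss}_+=P^\eps_+(0)R^{ss}_\infty+O(e^{-\eta X^\eps})$.

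Then I would invoke \eqref{cPdecay} to replace every $P^\eps_\pm(0)$ by $P^0_\pm(0)$ at cost $O(e^{-\bar\eta X^\eps/2})$, and recognize, from Definition~\ref{d:hom} with the choices $R^-_\infty=(R^{su}_\infty|R^c_\infty)$, $R^+_\infty=R^{ss}_\infty$ on $\Lambda_1$ and $R^-_\infty=R^{su}_\infty$, $R^+_\infty=(R^c_\infty|R^{ss}_\infty)$ on $\Lambda_2$, that $\det(P^0_-(0)R^{su}_\infty,\,P^0_-(0)R^c_\infty,\,P^0_+(0)R^{ss}_\infty)=D^0_1(\lambda)\det\hat R$ and $\det(P^0_-(0)R^{su}_\infty,\,P^0_+(0)R^c_\infty,\,P^0_+(0)R^{ss}_\infty)=D^0_2(\lambda)\det\hat R$ (using the analytic continuations of $D^0_1,D^0_2$ past $\lambda_j$ noted before Definition~\ref{e:blowup}). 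This yields $\tilde D^\eps=e^{\mu_c X^\eps/2}D^0_1-\gamma e^{-\mu_c X^\eps/2}D^0_2+\big(|e^{\mu_c X^\eps/2}|+|e^{-\mu_c X^\eps/2}|\big)O(e^{-\bar\eta X^\eps/2})=\tilde D^0(\lambda,\gamma)+(\text{error})$. The single genuinely new ingredient — and the only place the ball $B(\lambda_*,C/X^\eps)$ is used — is the observation that, since $\mu_c(\lambda_*)=ik_*$ is purely imaginary and $\mu_c$ is analytic, hence locally Lipschitz (say with constant $L$) near $\lambda_*$, for $\lambda\in B(\lambda_*,C/X^\eps)$ one has $|\Re\mu_c(\lambda)|\le LC/X^\eps$ and therefore $|e^{\pm\mu_c(\lambda)X^\eps/2}|\le e^{LC/2}$; these factors are \emph{bounded} (by a constant depending on $C$) but not small, which is exactly why the two terms must be retained as the leading-order transitional structure, and the error becomes $\le 2e^{LC/2}\,O(e^{-\bar\eta X^\eps/2})=O(e^{-\eta X^\eps/2})$ with a constant depending on $C$ but not on $\eps$, so \eqref{e:Dtildeconv} follows once $X^\eps$ is large.

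The part I expect to require the most care is the algebraic bookkeeping in the column-block factorization — in particular checking that pulling out $M_{su}$ and $-\gamma M_{ss}$ leaves cross terms of order $e^{-\eta X^\eps}$ rather than merely $e^{-\eta X^\eps/2}$ — which rests on $\|M_{su}^{-1}\|,\|M_{ss}^{-1}\|\le Ce^{-\eta X^\eps/2}$ for any fixed $\eta$ strictly below the \emph{strong} spectral gap of $A^0_\infty$ (the minimum of $|\Re|$ over the strongly stable and strongly unstable eigenvalues), a gap that stays bounded away from $0$ uniformly on the shrinking ball $B(\lambda_*,C/X^\eps)$ once $X^\eps$ is large, since $\Pi_{su},\Pi_c,\Pi_{ss}$ and $\mu_c$ are analytic at $\lambda_*$; one also keeps $\eta\le\bar\eta$ so the error from \eqref{cPdecay} is absorbed. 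Everything else is a direct transcription of the proof of Proposition~\ref{c:conv}.
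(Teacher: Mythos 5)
Your proposal is correct and takes essentially the same approach as the paper's proof: expansion via the strong-unstable/center/strong-stable trichotomy, cancellation of the strongly growing/decaying exponential factors by the rescaling prefactor in Definition \ref{e:blowup}, use of the boundedness of $e^{\pm\mu_c(\lambda) X^\eps/2}$ on $B(\lambda_*,C/X^\eps)$, and identification of the two resulting determinants with the analytic continuations of $D^0_1$ and $D^0_2$. The only differences are presentational — your column-block factorization against $\hat R$ (with the slightly sharper $O(e^{-\eta X^\eps})$ cross terms) and your explicit $P^\eps_\pm\to P^0_\pm$ replacement via \eqref{cPdecay} — which the paper compresses into ``expanding and factoring as in the proof of Proposition \ref{c:conv}.''
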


\begin{proof}
	Letting $ R^-_\infty $,$ R^c_\infty $, and $ R^+_\infty$ denote bases of 
	$\Range \Pi_{su}$, $\Range \Pi_{c}$, and $\Range \Pi_{ss}$, set
	$$
	\bp L^-_\infty \\ L^c_\infty\\L^+_\infty\ep= \bp R^-_\infty & R^c_\infty & R^+_\infty\ep^{-1}.
	$$
Expanding as in the proof of Proposition \ref{c:conv} via spectral resolution of $A^0_\infty$ gives
	$$
\begin{aligned}
	\tilde D^\eps &(\lambda, \gamma)= 
	e^{-\tr A^0_\infty \Pi_{su} X^\eps/2} e^{\tr A^0_\infty \Pi_{ss} X^\eps/2} (-\gamma)^{-n_1}\\
	&\quad \times
	\det \Big(
	P^\eps_-(0)R^-_\infty e^{A^0_\infty \Pi_{su} X^\eps/2} L^-_\infty 
	+
	P^\eps_-(0)R^c_\infty e^{\mu_c(\lambda ) X^\eps/2} L^c_\infty 
	\\
	&\quad - \gamma P^\eps_+(0)R^c_\infty e^{-\mu_c(\lambda)  X^\eps/2} L^c_\infty 
	- \gamma P^\eps_+(0)R^+_\infty e^{-A^0_\infty \Pi_{ss} X^\eps/2} L^+_\infty 
	+O(e^{-\eta X^\eps/2})\Big).
\end{aligned}
$$
Using $\Re \mu_c X^\eps=O(1)$ on $B(\lambda_*, C/X^\eps)$ and
	factoring as in the proof of Proposition \ref{c:conv} gives
$$
\begin{aligned}
\tilde D^\eps(\lambda, \gamma)&= 
	\det \bp P^\eps_-(0)R^-_\infty & 
	\big(e^{\mu_c(\lambda)X^\eps/2} P^\eps_-(0)R^c_\infty 
	- \gamma e^{-\mu_c(\lambda)X^\eps/2} P^\eps_+(0)R^c_\infty \big)
	&  P^\eps_+(0)R^+_\infty \ep \\
	&\quad \times
	\det \bp L^-_\infty  \\ L^c_\infty \\ L^+_\infty  \ep 
	+O(e^{-\eta X^\eps/2}),
\end{aligned}
$$
or, expanding the first determinant with respect to the middle column and recalling the definitions of $D^0_1$ and $D^0_2$,
$
\tilde D^\eps(\lambda, \gamma)= 
e^{\mu_c(\lambda) X^\eps/2} D^0_1(\lambda) 
- \gamma e^{-\mu_c(\lambda) X^\eps/2}D^0_2(\lambda) 
+ O(e^{-\eta X^\eps/2}).
$
\end{proof}

\begin{cor}\label{c:arc}
	Assuming (H1)--(H3), let $\lambda_*=\lambda_j(k_*)$ 
	lie on an isolated arc of homoclinic essential spectra as 
	above,
with $D^0_1(\lambda_*)$, $D^0_2(\lambda_*)\neq 0$.
	Then, denoting $\gamma=e^{ikX^\eps}$, there exists $C_1>0$ such that for any $C>0$, for $X^\eps$ sufficiently large, 
	the $\gamma$-eigenvalues of $\bar u^\eps$ in $B(\lambda_*, C/X^\eps)$,
	corresponding to zeros of $E^\eps(\cdot, \gamma)$, lie within $C_1/X^\eps$ of the set 
\be\label{latice}
\{\lambda_j(\kappa): \, \hbox{\rm $\kappa= k$ mod ($2\pi/ X^\eps$)} \}.
	\ee
\end{cor}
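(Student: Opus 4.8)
The plan is to derive Corollary \ref{c:arc} from Proposition \ref{main} together with a Rouché-type argument, exactly paralleling the way Corollary \ref{c:loops} follows from Proposition \ref{c:conv}. First I would observe that the $\gamma$-eigenvalues of $\bar u^\eps$ in $B(\lambda_*, C/X^\eps)$ are precisely the zeros of $E^\eps(\cdot,\gamma)$ there, and since $E^\eps$ differs from $\tilde E^\eps$ only by a nonvanishing exponential factor (Abel's formula), and $\tilde D^\eps$ differs from $\tilde E^\eps$ only by the nonvanishing prefactor $e^{-\tr A^0_\infty \Pi_{su} X^\eps/2}e^{\tr A^0_\infty \Pi_{ss} X^\eps/2}(-\gamma)^{-n_1}$, the zero set of $E^\eps(\cdot,\gamma)$ in $B(\lambda_*,C/X^\eps)$ coincides with the zero set of $\tilde D^\eps(\cdot,\gamma)$ there. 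So it suffices to locate the zeros of $\tilde D^\eps(\cdot,\gamma)$.

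Next I would analyze the zero set of the limiting object $\tilde D^0(\lambda,\gamma)= e^{\mu_c(\lambda)X^\eps/2}D^0_1(\lambda) - \gamma e^{-\mu_c(\lambda)X^\eps/2}D^0_2(\lambda)$. Writing $\gamma=e^{ikX^\eps}$, a zero of $\tilde D^0$ satisfies $e^{2\mu_c(\lambda)X^\eps/2 - ikX^\eps} = \gamma \cdot e^{-ikX^\eps} \cdot D^0_2(\lambda)/D^0_1(\lambda) \cdot \ldots$, i.e. $e^{(\mu_c(\lambda)-ik)X^\eps} = D^0_2(\lambda)/D^0_1(\lambda)$. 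Since $D^0_1(\lambda_*),D^0_2(\lambda_*)\neq 0$, the ratio $D^0_2/D^0_1$ is analytic and nonvanishing near $\lambda_*$, so its logarithm is bounded there; thus a solution requires $\Re(\mu_c(\lambda)-ik)X^\eps = O(1)$, hence $\Re\mu_c(\lambda) = O(1/X^\eps)$, which by nondegeneracy of $k_*$ (so that $\mu_c'(\lambda_*)\neq 0$ in the relevant direction, equivalently $\lambda_j'(k_*)\neq 0$) forces $\lambda$ to lie within $O(1/X^\eps)$ of the curve $\{\lambda_j(\kappa)\}$; and the imaginary-part equation $\Im\mu_c(\lambda)X^\eps \equiv kX^\eps \pmod{2\pi}$ then pins $\kappa \equiv k \pmod{2\pi/X^\eps}$, up to an $O(1/X^\eps)$ correction coming from the bounded logarithm of the ratio. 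Counting: on $B(\lambda_*,C/X^\eps)$ the phase $\mu_c(\lambda)X^\eps$ varies by $O(C)$, so there are $O(C)$ such lattice points $\kappa$, and correspondingly $O(C)$ zeros of $\tilde D^0(\cdot,\gamma)$, each simple (by $\mu_c'\neq 0$ and $D^0_j\neq 0$), each within $C_1/X^\eps$ of the set in \eqref{latice}.

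I would then transfer this to $\tilde D^\eps$ by Rouché's theorem. Around each zero $\lambda_0$ of $\tilde D^0(\cdot,\gamma)$ (and around the "gaps" between consecutive zeros), draw circles of radius $\sim c/X^\eps$ on which $|\tilde D^0(\lambda,\gamma)|$ is bounded below by a constant multiple of $\max(|e^{\mu_c X^\eps/2}|,|e^{-\mu_c X^\eps/2}|) \gtrsim 1$ — this lower bound uses the nondegeneracy and nonvanishing of $D^0_1,D^0_2$, and is uniform in $|\gamma|=1$. Since $|\tilde D^\eps - \tilde D^0|\le Ce^{-\eta X^\eps/2}$ by Proposition \ref{main}, which is exponentially smaller than this algebraic-in-$1/X^\eps$ lower bound for $X^\eps$ large, Rouché's theorem gives that $\tilde D^\eps(\cdot,\gamma)$ has exactly the same number of zeros inside each such circle as $\tilde D^0(\cdot,\gamma)$. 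Hence the zeros of $\tilde D^\eps(\cdot,\gamma)$ — and therefore the $\gamma$-eigenvalues of $\bar u^\eps$ — in $B(\lambda_*,C/X^\eps)$ lie within $C_1/X^\eps$ of the set \eqref{latice}, with $C_1$ independent of $C$ (enlarging $C_1$ once to absorb the fixed Rouché radii).

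The main obstacle I anticipate is the uniform lower bound on $|\tilde D^0(\lambda,\gamma)|$ along the Rouché contours, uniformly in $\gamma$ with $|\gamma|=1$ and on the $1/X^\eps$ scale: one must check that the two terms $e^{\mu_c X^\eps/2}D^0_1$ and $\gamma e^{-\mu_c X^\eps/2}D^0_2$ cannot nearly cancel except precisely at (and near) the lattice points, which amounts to a careful analysis of the function $e^{(\mu_c(\lambda)-ik)X^\eps} - D^0_2(\lambda)/D^0_1(\lambda)$ on the $O(1/X^\eps)$-rescaled variable. This is where the nondegeneracy hypothesis on $k_*$ (equivalently $\mu_c'(\lambda_*)\neq 0$ transverse to $\lambda_j$) and the nonvanishing of $D^0_1,D^0_2$ at $\lambda_*$ are essential. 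Everything else is routine: the reduction from $E^\eps$ to $\tilde D^\eps$ via Abel's formula and nonvanishing prefactors, and the final Rouché application, which only needs that the exponential error beats any fixed negative power of $X^\eps$.
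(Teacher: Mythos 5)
Your proposal is correct and follows essentially the same route as the paper: reduce to zeros of $\tilde D^\eps$, invoke Proposition \ref{main} on the ball $B(\lambda_*,C/X^\eps)$, apply Rouch\'e, and convert $\mu_c(\lambda)\approx ik$ (mod $2\pi i/X^\eps$) back to $\lambda\approx \lambda_j(\kappa)$ using $\mu_c'(\lambda_*)\neq 0$ and $d_1,d_2\neq 0$. The only real difference is organizational: the paper rescales $\lambda=\lambda_*+z/X^\eps$ and normalizes so that $\tilde D^\eps$ converges at rate $O(|X^\eps|^{-1})$ to the fixed, $X^\eps$-independent function $e^{\mu_c'(\lambda_*)z}-\hat\gamma\, d_2/d_1$, which renders the contour lower bound you flag as the main obstacle automatic, whereas you compare $\tilde D^\eps$ directly with $\tilde D^0$ (exponentially small error) and then locate the zeros of $\tilde D^0$ from $e^{(\mu_c(\lambda)-ik)X^\eps}=D^0_2(\lambda)/D^0_1(\lambda)$ --- a minor variant leading to the same conclusion and the same $O(1/X^\eps)$ corrector $\ln(d_2/d_1)/\mu_c'(\lambda_*)X^\eps$ noted in Remark \ref{sharprmk}.
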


\begin{proof}
From \eqref{e:transhom}-\eqref{e:Dtildeconv} and Taylor expansion
$ \mu_c(\lambda_*+z/X^\eps)=ik_*+ \mu_c'(\lambda_*)z/X^\eps + O(|X^\eps|^{-2}), $
we see immediately that for $z\in B(0,C)$, $|\hat \gamma|=1$ fixed
	$$
	d_1^{-1} e^{-ik_* X^\eps/2 +\mu_c'(\lambda_*) z /2}
	\tilde D^\eps( \lambda_*+ z/X^\eps,e^{ik_*X^\eps}\hat \gamma)\to 
		e^{\mu_c'(\lambda_*) z }  - \hat \gamma d_2/d_1
	$$
as $\eps \to 0$, at rate 
$O(|X^\eps|^{-1})$, from which we find by properties of analytic functions (Rouch\'es Theorem) that zeros of 
$\tilde D^\eps( \lambda_*+ z/X^\eps,e^{ik_*X^\eps}\hat \gamma)$ converge at rate $O(|X^\eps|^{-1})$ to solutions of
\be\label{e:sharp}
\mu_c'(\lambda_*)z= [ \ln (\hat \gamma)+ \ln (d_2/d_1)] \; \hbox{\rm mod($2\pi i$)}.
\ee

Converting back to $\lambda$ coordinates, we have that $\lambda-\lambda_*=z/X^\eps$ converges at 
rate $O(|X^\eps|^{-2})$ to 
\be\label{lest}
(1/\mu_c'(\lambda_*))
(\ln(\hat \gamma) + \ln (d_2/d_1)) /X^\eps \; \hbox{\rm mod($2\pi i/X^\eps$)},
\ee
whence, using $\hat \gamma= \gamma e^{-ik_*X^\eps}= e^{ikX^\eps -ik_*X^\eps}$, or
$\ln(\hat \gamma)/X^\eps=ik-ik_*$ (mod $2\pi i/X^\eps$), we find that
$\mu_c(\lambda)= ik_* + \mu_c'(\lambda_*)(\lambda-\lambda_*) + O(|X^\eps|^{-2})$ converges at rate $O(|X^\eps|^{-2})$ to
$$
[ik + \ln (d_2/d_1)/X^\eps ]  \; \hbox{\rm mod($2\pi i/X^\eps$)},
$$
and thus at $O(|X^\eps|^{-1})$ rate to $ik$ mod($2\pi i/X^\eps$).
By definition of $\lambda_j$, this is equivalent to convergence of $\lambda$ at rate $O(|X^\eps|^{-1})$ to $\lambda_j(\kappa)$
for $ \kappa=k$ mod($2\pi/X^\eps$).
Noting that convergence at each step of the argument is uniform with respect to $\hat \gamma$, we obtain the result.
\end{proof}

\br\label{sharprmk}
Estimate \eqref{lest} shows that the rate of convergence $O(|X^\eps|^{-1})$ is sharp unless $d_1=d_2$, giving
an explicit corrector $\ln(d_2/d_1)/\mu_c'(\lambda_*) X^\eps$ valid to order $O(|X^\eps|^{-2})$.
\er

\subsection{Convergence to embedded point spectra}\label{s:embedded}

Finally, we consider the case of an embedded homoclinic eigenvalue $\lambda_*$ of multiplicity $m$
contained in an isolated arc of essential spectrum $\lambda_j$ 
dividing regions of hyperbolicity $\Lambda_1$ and $\Lambda_2$ as described in Section \ref{s:arc}.
By multiplicity $m$ eigenvalue, we mean a value with an $m$-dimensional subspace of decaying generalized eigenfunctions.
This implies that both homoclinic Evans functions $D^0_1$ and $D^0_2$ have a zero of multiplicity 
at least $m$ at $\lambda=\lambda_*$, since they differ only with respect to the nondecaying mode $\mu_c$.
We make the additional nondegeneracy assumption that 
$D^0_1$ and $D^0_2$ have zeros at $\lambda_*$ of exactly multiplicity $m$.

\begin{cor}\label{c:arcembed}
Assuming (H1)--(H3), let $\lambda_*=\lambda_j(k_*)$ 
be a point lying on an isolated arc of homoclinic essential spectra 
at which $D^0_1$, $D^0_2$ possess zeros of degree $m$. 
Then, denoting $\gamma=e^{ikX}$, there exists $C_1>0$ such that for any $C>0$, $\eta>\tilde \eta>0$, 
for $X^\eps$ sufficiently large, 
the $\gamma$-eigenvalues of $\bar u^\eps$ in $B(\lambda_*, C/X^\eps)$,
	or zeros of $E^\eps(\cdot, \gamma)$, 
	consist of points lying within $ C_1/X^\eps$ of
$\{\lambda_j(\kappa): \, \hbox{\rm $\kappa= k$ mod ($2\pi/ X^\eps$)} \}$
plus $m$ points lying within $C_1e^{-\tilde \eta X^\eps/2(m+1)}$ of $\lambda_*$.
\end{cor}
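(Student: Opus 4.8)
The plan is to combine the transitional convergence estimate of Proposition \ref{main} with a factorization of the transitional homoclinic Evans function $\tilde D^0$ that separates its zeros into ``essential-type'' zeros lying near $\lambda_j$ and ``point-type'' zeros lying near $\lambda_*$, then apply Rouch\'e's theorem on two different scales. First I would use the nondegeneracy hypothesis to write $D^0_i(\lambda)=(\lambda-\lambda_*)^m g_i(\lambda)$ on a fixed neighborhood of $\lambda_*$, with $g_i$ analytic and $g_i(\lambda_*)\neq 0$, $i=1,2$. Substituting into \eqref{e:transhom} gives
$$
\tilde D^0(\lambda,\gamma)=(\lambda-\lambda_*)^m\Big(e^{\mu_c(\lambda)X^\eps/2}g_1(\lambda)-\gamma e^{-\mu_c(\lambda)X^\eps/2}g_2(\lambda)\Big),
$$
so that $\tilde D^0$ has a zero of order $m$ at $\lambda_*$ itself, together with a family of ``simple'' zeros coming from the second factor, which behaves exactly like the transitional function in Corollary \ref{c:arc} but with $d_i$ replaced by $g_i(\lambda_*)$. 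The first step of the argument is therefore bookkeeping: identify these two zero sets of the limiting object $\tilde D^0$.

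Next I would run the scaling argument of Corollary \ref{c:arc} verbatim on the bracketed factor. Setting $\lambda=\lambda_*+z/X^\eps$ with $z\in B(0,C)$ bounded away from $0$ (say $|z|\ge \rho$ for small fixed $\rho>0$), the factor $(\lambda-\lambda_*)^m=(z/X^\eps)^m$ is nonzero and of size $\sim |X^\eps|^{-m}$, so dividing \eqref{e:Dtildeconv} by it still yields convergence — now at rate $O(|X^\eps|^{-1})$ after also dividing by $g_1(\lambda_*)e^{ik_*X^\eps/2}$ and Taylor-expanding $\mu_c$ as in the proof of Corollary \ref{c:arc} — of the renormalized $\tilde D^\eps$ to $e^{\mu_c'(\lambda_*)z}-\hat\gamma g_2(\lambda_*)/g_1(\lambda_*)$. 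Rouch\'e's theorem on the annulus $\rho\le |z|\le C$ then places all zeros of $E^\eps(\cdot,\gamma)$ in that annular region within $C_1/X^\eps$ of the lattice $\{\lambda_j(\kappa):\kappa=k \text{ mod } 2\pi/X^\eps\}$, exactly as in Corollary \ref{c:arc}; the only new point is that we have excised a small disc $B(\lambda_*,\rho/X^\eps)$, which we treat separately.

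For the excised disc I would work on a genuinely exponentially small scale. Write $\gamma=e^{ikX^\eps}$; the quantity $e^{\mu_c(\lambda)X^\eps/2}g_1(\lambda)-\gamma e^{-\mu_c(\lambda)X^\eps/2}g_2(\lambda)$ is uniformly bounded above and below (in modulus) on $B(\lambda_*,\rho/X^\eps)$ for fixed $\rho$ small and $X^\eps$ large, since $g_i(\lambda_*)\neq 0$ and $\Re\mu_c X^\eps=O(1)$ there, so on that disc $\tilde D^0(\lambda,\gamma)=(\lambda-\lambda_*)^m h(\lambda,\gamma)$ with $|h|\asymp 1$; hence $\tilde D^0$ has exactly $m$ zeros (counted with multiplicity) at $\lambda_*$ inside the disc. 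Comparing to $\tilde D^\eps$ via \eqref{e:Dtildeconv}, and noting $|\tilde D^0|\gtrsim (\tilde r/X^\eps)^m \gtrsim (\tilde r)^m |X^\eps|^{-m}$ on the boundary circle $|\lambda-\lambda_*|=\tilde r/X^\eps$ for $\tilde r\in(0,\rho]$, Rouch\'e's theorem applies as soon as $Ce^{-\eta X^\eps/2}<(\tilde r/X^\eps)^m$; choosing $\tilde r=\tilde r(X^\eps)$ decaying like $e^{-\tilde\eta X^\eps/2m}$ (so $(\tilde r/X^\eps)^m\sim e^{-\tilde\eta X^\eps/2}/|X^\eps|^m$ still dominates $Ce^{-\eta X^\eps/2}$ for $\tilde\eta<\eta$) shows that $\tilde D^\eps(\cdot,\gamma)$ — equivalently $E^\eps(\cdot,\gamma)$, since the two differ by a nonvanishing factor — has exactly $m$ zeros inside $B(\lambda_*,\tilde r/X^\eps)$. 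Writing out the exponents, $\tilde r/X^\eps = C_1 |X^\eps|^{-1}e^{-\tilde\eta X^\eps/2m}\le C_1 e^{-\tilde\eta X^\eps/2(m+1)}$ for $X^\eps$ large, giving the stated $C_1e^{-\tilde\eta X^\eps/2(m+1)}$ bound. Together with the annular estimate, and since the two regions exhaust $B(\lambda_*,C/X^\eps)$, this accounts for all $\gamma$-eigenvalues in the ball and yields the corollary.

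The main obstacle is the second step: making Rouch\'e's theorem work inside the excised disc, where the limiting function $\tilde D^0$ is \emph{not} bounded below — it has an honest zero of order $m$ — so one must track the competition between the polynomial lower bound $(\tilde r/X^\eps)^m$ on a shrinking circle and the fixed exponential error $Ce^{-\eta X^\eps/2}$ from Proposition \ref{main}, and verify that a radius decaying at the advertised exponential-over-$2(m+1)$ rate sits in the admissible window. This also forces the slightly lossy exponent $2(m+1)$ rather than $2m$ (to absorb the $|X^\eps|^{-m}$ algebraic factor) and the technical condition $\eta>\tilde\eta$. The remaining issue is a uniformity check: the constants in both Rouch\'e applications must be independent of $\hat\gamma$ with $|\hat\gamma|=1$ (equivalently of $k$), which follows as in Corollary \ref{c:arc} since every estimate above is uniform in $\gamma$ on the unit circle.
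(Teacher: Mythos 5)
Your overall architecture (factor out $(\lambda-\lambda_*)^m$, rescale $\lambda=\lambda_*+z/X^\eps$, then Rouch\'e on two scales) is the same as the paper's, and your annulus step and your treatment of the non-resonant disc are fine. But the disc step contains a genuine gap: the claim that the bracketed factor
$e^{\mu_c(\lambda)X^\eps/2}g_1(\lambda)-\gamma e^{-\mu_c(\lambda)X^\eps/2}g_2(\lambda)$
is uniformly bounded below in modulus on $B(\lambda_*,\rho/X^\eps)$, uniformly in $|\gamma|=1$, is false. Up to a nonvanishing factor this quantity equals $e^{\mu_c(\lambda)X^\eps}g_1(\lambda)-\gamma g_2(\lambda)$, and at $\lambda=\lambda_*$ (where $e^{\mu_c(\lambda_*)X^\eps}=e^{ik_*X^\eps}$) it vanishes exactly when $\hat\gamma=\gamma e^{-ik_*X^\eps}$ equals $\hat d_1/\hat d_2$, and is arbitrarily small for $\hat\gamma$ near that value. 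Since the corollary is asserted for every $\gamma$ on the unit circle, this resonant configuration -- which is precisely the case ``$\lambda_*\in\mathcal{L}_k$'' -- cannot be excluded, and in it your function $h$ has a zero inside (or exponentially near) the excised disc, so the lower bound $|\tilde D^0|\gtrsim(\tilde r/X^\eps)^m$ on the tiny circle fails and your Rouch\'e comparison does not close; your final ``uniformity in $\hat\gamma$'' remark papers over exactly this. The paper's proof addresses this head-on: it compares $\tilde D^\eps$ with the sharper limit $z^m\bigl(e^{(\mu_c(\lambda_*+z/X^\eps)-\mu_c(\lambda_*))X^\eps}-\hat\gamma\,\hat D^0_2/\hat D^0_1\bigr)$ (keeping the full exponent, not its linearization) and then applies Rouch\'e ``on a case-by-case basis depending whether or not $\hat d_2/\hat d_1=e^{ikX^\eps}$, i.e., whether or not $\lambda_*\in\mathcal{L}_k$.''

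Relatedly, your explanation of the exponent $2(m+1)$ is off. In the non-resonant case your own computation gives radius $\sim e^{-\tilde\eta X^\eps/2m}/X^\eps$, i.e., better than claimed; the reason the statement carries the weaker exponent $2(m+1)$ is the resonant case, where the limiting rescaled function has a zero of multiplicity $m+1$ at $z=0$ (the $m$-fold embedded root colliding with one lattice root), so that an exponentially small error $O(e^{-\tilde\eta X^\eps/2})$ only localizes roots to $|z|\lesssim e^{-\tilde\eta X^\eps/2(m+1)}$. To repair your argument you need to add this case analysis (exact and near resonance, e.g.\ choosing the Rouch\'e radius according to the size of $1-\hat\gamma\hat d_2/\hat d_1$), verifying in the resonant regime that $m$ of the $m+1$ nearby roots are counted as the embedded cluster and the remaining one lies within $C_1/X^\eps$ of $\mathcal{L}_k$.
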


\begin{proof}
Factoring $D^0_1(\lambda)=(\lambda-\lambda_*)^m \hat D^0_1(\lambda)$, 
$D^0_2(\lambda)=(\lambda-\lambda_*)^m \hat D^0_2(\lambda)$, setting
$\hat d_1:=\hat D^0_1(\lambda_*)$, $\hat d_2:=\hat D^0_2(\lambda_*)$, and
and applying again \eqref{e:Dtildeconv}, we find that for $z\in B(0,C)$, $|\hat \gamma|=1$ fixed,
	$$
	(X^\eps)^m \hat d_1^{-1} e^{-ik_* X^\eps/2 +\mu_c'(\lambda_*) z /2}
	\tilde D^\eps( \lambda_*+ z/X^\eps,e^{ik_*X^\eps}\hat \gamma)\to 
z^m \big( e^{ \mu_c'(\lambda_*) z }  - \hat \gamma \hat d_2/\hat d_1\big)
	$$
as $\eps \to 0$, at rate $O(|X^\eps|^{-1})$, from which we find that the zeros of 
$\tilde D^\eps( \lambda_*+ z/X^\eps,e^{ik_* X^\eps}\hat \gamma)$ converge at rate 
$O(|X^\eps|^{-1/(m+1)})$ to solutions of 
	\be\label{rescale}
	\mu_c'(\lambda_*)z= [ \ln(\hat \gamma) + \ln (\hat d_2/\hat d_1)] \, \hbox{\rm mod($2\pi i$)},
	\ee
and to the $m$-tuple root at $z=0$.  Converted back to $\lambda$-coordinates, this 
gives the desired result of $O(|X^\eps|^{-1})$ convergence to 
$\mathcal{L}_k=\{\lambda_j(\kappa): \, \hbox{\rm $\kappa= k$ mod ($2\pi/ X^\eps$)} \}$ along with the
suboptimal result of $O(|X^\eps|^{-1 + 1/(m+1)})$ convergence to the $m$-tuple root at $\lambda=\lambda_*$.

To obtain the optimal $O(e^{-\tilde \eta X^\eps/2(m+1)})$ rate of convergence stated 
for the $m$-fold eigenvalue $\lambda_*$, we may go back again to \eqref{e:Dtildeconv} to obtain the sharper result that
$$
	(X^\eps)^m (\hat D^0_1)^{-1} e^{-ik_* X^\eps/2 +(\mu_c(\lambda_*+ z/X^\eps)-\mu_c (\lambda_*))X^\eps  /2}
	\tilde D^\eps( \lambda_*+ z/X^\eps, e^{ik_* X^\eps}\hat \gamma) 
$$
lies within $O(e^{-\tilde \eta X^\eps/2})$ of
$ z^m \big( 
e^{(\mu_c(\lambda_* + z/X^\eps)- \mu_c(\lambda_*)) X^\eps }  
-  \hat \gamma \hat D^0_2/\hat D^0_1 \big)$,
from which we may obtain the result by direct application of Rouch\'es Theorem, on a case-by-case basis depending
whether or not $\hat d_2/\hat d_1=e^{ikX^\eps}$, i.e., whether or not $\lambda_* \in \mathcal{L}_k$.
We omit the details.
\end{proof}

\subsection{Behavior near an embedded eigenvalue at $\lambda=0$}\label{s:embedzero}
In the case of a multiplicity-one embedded ``translational'' homoclinic eigenvalue at $\lambda=0$,
it often transpires that, besides the corresponding translational ($\gamma=1$)-eigenvalues of nearby periodic waves
at $\lambda=0$, the ($\gamma=1$)-eigenvalue at $\lambda=0$ has additional multiplicity equal to the number of arcs
$\lambda_j$ of homoclinic essential spectra on which it lies.
See in particular the case of hyperbolic and parabolic balance laws discussed in \cite{JZN,JNRZ1}.
In this case we may deduce from \eqref{rescale} that $\hat d_1=\hat d_2$, giving an improved convergence
rate of $(C_1/X^\eps)^2$ of periodic to homoclinic essential spectra as described in Corollary \ref{c:arcembed}.

\section{Application to Saint Venant equations}\label{s:SV}
The Saint Venant equations for inclined shallow-water flow are, in nondimensional form,
\begin{equation}\label{sv}
\displaystyle
\partial_t h+\partial_x (hu)=0,\quad \partial_t (hu)+\partial_x \left(hu^2+\frac{h^2}{2F^2}\right)=h-|u| u
+\nu\partial_x(h\partial_x u),
\end{equation} 
where $h$ is fluid height, $u$ vertical fluid velocity average,
$x$ longitudinal distance, $t$ time,
$F$ a Froude number given by the ratio between (a chosen reference) speed of the fluid 
and speed of gravity waves, and $\nu=R_e^{-1}$, with $R_e$ the Reynolds number of the fluid.
Terms $h$ and $|u|\,u$ on the right represent opposing forces of gravity and turbulent bottom friction.

Well-known solutions of \eqref{sv} are periodic {\it roll} (traveling) {\it waves} \cite{Br1,Br2,Dr} advancing with constant speed
down a canal or spillway. For fixed Froude number $F>2$, these appear in families indexed by period $X$ and average height $u$ over one period, arising through a classical Hopf to homoclinic bifurcation scenario \cite{BJNRZ4}.
In particular, they feature a homoclinic limit as studied in this note, with a single embedded eigenvalue at $\lambda=0$, contained in a single arc of unstable essential spectra, and all other spectra strictly stable; see \cite{BJRZ} for further details.
As noted in the introduction, this is a case to which the results of \cite{G1,G2,SS} do not apply but that can be
treated by our analysis here.

Specifically,
Corollary \ref{c:arcembed} verifies the intuitive conclusion that periodic roll waves are unstable in the large-period limit
due to convergence of periodic spectra
to unstable homoclinic essential spectra, settling the question of large-period stability.
However, there is a much more interesting phenomenon involved with the homoclinic limit, worthy of further investigation.
Namely, as noted in \cite{PSU} more generally, mathematical models of inclined thin film flow appear to share
the feature that homoclinic waves are essentially unstable; yet,
both experiment and models of inclined thin film flow yield asymptotic behavior consisting of the approximate superposition
of well-separated homoclinic waves.

A heuristic explanation of this paradox given in \cite{BJNRZ4,BJRZ} is that sufficiently closely arrayed homoclinic waves can stabilize each others convective essential instabilities, manifested as exponentially growing perturbations traveling with a 
nonzero group velocity, through de-amplifying properties of the localized homoclinic pulses encoded in their strictly stable point spectrum.
Indeed, this appears to match well with observed onset of stability of periodic waves at periods suggested by this proposed mechanism \cite[Section 6]{BJRZ}.

However, up to now, there is lacking
a rigorous explanation at the level of spectral stability
relating the observed behavior to properties of the homoclinic wave.  To carry out such an
analysis by asymptotic techniques like those used here and in \cite{G1,G2,SS}
seems an outstanding open problem in periodic stability theory and dynamics of thin-film flow.


\appendix

\section{Asymptotic ODE theory}\label{asymptotic}
Here, we recall the asymptotic Evans function results cited earlier;
for proofs, see, e.g., \cite{Z2}.

\subsection{The conjugation lemma}\label{s:conj}
Consider a general first-order system
\be \label{gen1}
W'=A^p(x,\lambda)W
\ee
with asymptotic limits $A^p_\pm$ as $x\to \pm \infty$,
where $p\in \RR^m$ denote model parameters.

\bl [\cite{MZ,PZ}]\label{conjlemapp}
Suppose for fixed $\theta>0$ and $C>0$ that 
\be\label{udecay}
|A^p-A^p_\pm|(x,\lambda)\le Ce^{-\theta |x|}
\ee
for $x\gtrless 0$ uniformly for $(\lambda,p)$ in a neighborhood of 
$(\lambda_0)$, $p_0$ and that $A$ varies analytically in $\lambda$ 
and continuously in $p$ as a function into $L^\infty(x)$.
Then, there exist in a neighborhood of $(\lambda_0,p_0)$
invertible linear transformations $P^p_+(x,\lambda)=I+\Theta_+^p(x,\lambda)$ 
and $P_-^p(x,\lambda) =I+\Theta_-^p(x,\lambda)$ defined
on $x\ge 0$ and $x\le 0$, respectively,
analytic in $\lambda$ and continuous in $p$ 
as functions into $L^\infty [0,\pm\infty)$, such that
\begin{equation}
\label{Pdecayapp} 
| \Theta_\pm^p |\le C_1 e^{-\bar \theta |x|}
\quad
\text{\rm for } x\gtrless 0,
\end{equation}
for any $0<\btheta<\theta$, some $C_1=C_1(\bar \theta, \theta)>0$,
and the change of coordinates $W=:P^p_\pm Z$ reduces \eqref{gen1} to 
the constant-coefficient limiting systems
\begin{equation}
\label{glimitapp}
Z'=A^p_\pm Z 
\quad
\text{\rm for } x\gtrless 0.
\end{equation}
\el

\br\label{pfrmk}
As shown in the proof (e.g., \cite{Z2}), necessarily also $(P^p_\pm) '= A^p P_\pm  - P_\pm  A^p_\pm$.
\er

\subsection{The convergence lemma}\label{s:convlem}
Consider a family of first-order equations 
\be \label{gen2}
W'=A^p(x,\lambda)W
\ee
indexed by a parameter $p$, and satisfying exponential
convergence condition \eqref{udecay} uniformly in $p$.
Suppose further that, for some $\delta(p)\to 0$ as $p\to 0$,
\begin{equation} \label{residualest}
|(A^p- A^p_\pm)-
(A^0- A^0_\pm)|
\le C\delta(p) e^{-\theta |x|}, \qquad \theta>0
\end{equation}
and
\begin{equation} \label{newest}
|( A^p- A^0)_\pm)| \le C\delta(p).
\end{equation}

\begin{lemma}[\cite{PZ,BHZ}] \label{evanslimit}
Assuming \eqref{udecay} and \eqref{residualest}--\eqref{newest},
for $|p|$ sufficiently small,
there exist invertible linear transformations 
$P_+^p(x,\lambda)=I+\Theta_+^p(x,\lambda)$ 
and $P_-^0(x,\lambda) =I+\Theta_-^p(x,\lambda)$ defined
on $x\ge 0$ and $x\le 0$, respectively,
analytic in $\lambda$ as functions into $L^\infty [0,\pm\infty)$, such that
\begin{equation}
\label{cPdecayapp} 
| (P^p-P^0)_\pm(x) |\le C_1 \delta(p) e^{-\bar \theta |x|}
\quad
\text{\rm for } x\gtrless 0,
\end{equation}
for any $0<\btheta<\theta$, some $C_1=C_1(\bar \theta, \theta)>0$,
and the change of coordinates $W=:P_\pm^p Z$ reduces \eqref{gen2} to 
the constant-coefficient limiting systems
\begin{equation}
\label{cglimit}
Z'=A^p_\pm(\lambda) Z 
\quad
\text{\rm for } x\gtrless 0.
\end{equation}
\end{lemma}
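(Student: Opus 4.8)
First I would reduce everything to the conjugation lemma, Lemma~\ref{conjlemapp}, by comparing the conjugators attached to the parameter values $p$ and $0$. Since $A^p$ satisfies the exponential convergence condition \eqref{udecay} uniformly in $p$, Lemma~\ref{conjlemapp} applies both at a general small $p$ and at $p=0$, producing invertible linear transformations $P^p_\pm=I+\Theta^p_\pm$ and $P^0_\pm=I+\Theta^0_\pm$ on $x\gtrless0$, analytic in $\lambda$ as functions into $L^\infty[0,\pm\infty)$, with $|\Theta^p_\pm|,|\Theta^0_\pm|\le C_1 e^{-\bar\theta|x|}$ for any $0<\bar\theta<\theta$, and such that $W=P^p_\pm Z$ reduces \eqref{gen2} (respectively its $p=0$ instance) to the constant-coefficient systems \eqref{cglimit}. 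Invertibility, $\lambda$-analyticity, and the conjugation property \eqref{cglimit} are thus already in hand; it remains only to establish the difference bound \eqref{cPdecayapp}, i.e. $|(P^p-P^0)_\pm(x)|\le C_1\,\delta(p)\,e^{-\bar\theta|x|}$.

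To this end I would set $Q:=(P^p-P^0)_\pm=\Theta^p_\pm-\Theta^0_\pm$, a bounded matrix function decaying to $0$ as $x\to\pm\infty$ (already obeying the crude bound $|Q|\le 2C_1 e^{-\bar\theta|x|}$, which the argument below will sharpen to carry a factor $\delta(p)$). By Remark~\ref{pfrmk}, $(P^p_\pm)'=A^p P^p_\pm-P^p_\pm A^p_\pm$ and $(P^0_\pm)'=A^0 P^0_\pm-P^0_\pm A^0_\pm$; subtracting and regrouping gives
\[
Q'=A^p Q-Q A^p_\pm+G,\qquad G:=(A^p-A^0)P^0_\pm-P^0_\pm(A^p_\pm-A^0_\pm).
\]
The crucial observation is that $G$ is simultaneously $O(\delta(p))$ and exponentially localized. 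Indeed, writing $A^p-A^0=\big[(A^p-A^p_\pm)-(A^0-A^0_\pm)\big]+(A^p_\pm-A^0_\pm)$ and $P^0_\pm=I+\Theta^0_\pm$, the a priori non-decaying part $(A^p_\pm-A^0_\pm)P^0_\pm-P^0_\pm(A^p_\pm-A^0_\pm)$ reduces to the commutator $[\,A^p_\pm-A^0_\pm,\,\Theta^0_\pm\,]$, whence \eqref{residualest}, \eqref{newest}, boundedness of $P^0_\pm$, and the decay of $\Theta^0_\pm$ yield
\[
|G(x)|\le C'\,\delta(p)\,e^{-\bar\theta|x|},\qquad x\gtrless0.
\]

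Finally I would solve the linear inhomogeneous equation for $Q$ with the vanishing condition at $\pm\infty$ by precisely the weighted-norm contraction scheme used to prove Lemma~\ref{conjlemapp} (see \cite{Z2}): on the Banach space of matrix functions with $\|Q\|:=\sup_{x\gtrless0}e^{\bar\theta|x|}|Q(x)|$ finite, one recasts the equation as a fixed-point problem $Q=\mathcal{K}Q+\mathcal{G}$, where $\mathcal{K}$ is the integral operator built from $e^{A^p_\pm\cdot}$ and the decaying perturbation $A^p-A^p_\pm$ --- a contraction once restricted to $|x|\ge N$ for $N$ fixed large, with a routine finite-interval correction on the remaining compact set --- and $\mathcal{G}$ is the corresponding potential of $G$. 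Since $\mathcal{K}$ is a $p$-uniform contraction while $\|\mathcal{G}\|\le C\delta(p)$ by the previous step, the fixed point obeys $\|Q\|\le C_1\delta(p)$, which is exactly \eqref{cPdecayapp}. I expect the genuinely delicate ingredient to be the weighted-norm bound on $\mathcal{K}$ in the case that the limiting coefficient $A^p_\pm$ is not hyperbolic (its spectrum may meet or straddle the imaginary axis); but this difficulty is inherited verbatim from the proof of Lemma~\ref{conjlemapp} and demands no new work here, so the only genuinely new point is the source-term cancellation identified above, which keeps $G$ exponentially decaying and thereby allows the weight $e^{-\bar\theta|x|}$ to be propagated with an $O(\delta(p))$ constant.
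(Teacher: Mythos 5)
The paper itself does not prove Lemma \ref{evanslimit}; it is quoted from \cite{PZ,BHZ} with proofs deferred to \cite{Z2}, so the comparison is against that standard argument. Your scheme is essentially the same one: in the references both $\Theta^p_\pm$ and $\Theta^0_\pm$ are realized as fixed points of a weighted-norm contraction map $\mathcal{T}^p$, and \eqref{cPdecayapp} follows from the uniform contraction constant together with $\|\mathcal{T}^p-\mathcal{T}^0\|=O(\delta(p))$, which is exactly the integral-equation counterpart of your computation that the forcing $G=(A^p-A^0)P^0_\pm-P^0_\pm(A^p_\pm-A^0_\pm)$ is both $O(\delta(p))$ and exponentially localized. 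Your commutator observation, writing the a priori non-decaying contribution as $[\,A^p_\pm-A^0_\pm,\,\Theta^0_\pm\,]$ and invoking \eqref{residualest}--\eqref{newest} plus the decay of $\Theta^0_\pm$, is correct and is indeed the crux; the only bookkeeping caveat is that this term decays only at the rate $\bar\theta'<\theta$ of $\Theta^0_\pm$, so one should fix $\bar\theta<\bar\theta'<\theta$ to land at the stated rate, the same harmless loss already present in Lemma \ref{conjlemapp}.

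There is, however, one genuine gap: the identification of $Q$. You set $Q:=(P^p-P^0)_\pm$ for conjugators produced \emph{separately} by Lemma \ref{conjlemapp}, and then conclude from the contraction scheme that ``the fixed point obeys $\|Q\|\le C_1\delta(p)$.'' But the fixed point of your integral equation is only one particular solution of $Q'=A^pQ-QA^p_\pm+G$ vanishing at infinity; the homogeneous equation $Q'=A^pQ-QA^p_\pm$ has nontrivial solutions decaying at $+\infty$ (conjugating by the flows, its modes behave like $e^{(\mu_i-\mu_j)x}$ with $\mu_i,\mu_j\in\sigma(A^p_\pm)$, and any difference with negative real part decays), so decay at infinity does not single out your $Q$, and indeed \eqref{cPdecayapp} cannot hold for \emph{arbitrary} choices of the two conjugators --- the lemma is an existence statement about compatible choices. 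The repair is short: take $Q$ to be the constructed fixed point and \emph{define} $P^p_\pm:=P^0_\pm+Q$. Then, using Remark \ref{pfrmk} for $P^0_\pm$ and the equation for $Q$, one checks in one line that $(P^p_\pm)'=A^pP^p_\pm-P^p_\pm A^p_\pm$, so $W=P^p_\pm Z$ reduces \eqref{gen2} to \eqref{cglimit}; moreover $P^p_\pm\to I$ as $x\to\pm\infty$ gives invertibility for $|x|$ large, which propagates to all $x\gtrless 0$ since $P^p_\pm(x)$ equals an invertible solution operator of \eqref{gen2} times $P^p_\pm(y)$ times an invertible constant-coefficient flow. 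With that adjustment (or by instead comparing the fixed points of the two contraction maps as in \cite{PZ,BHZ,Z2}), your argument is complete, and analyticity in $\lambda$ follows as usual from uniform convergence of the analytic-in-$\lambda$ fixed-point iterates.
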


\end{document}